\documentclass[11pt]{amsart}
\usepackage{amssymb}

\usepackage{color}

\textwidth = 440pt
\textheight = 630pt
\hoffset = -50pt
\voffset = -20pt
\marginparwidth = 58pt


\newenvironment{theorem}{\subsection{}{\textbf {Theorem.}}\em}{}
\newenvironment{proposition}{\subsection{}{\textbf {Proposition.}}\em}{}
\newenvironment{corollary}{\subsection{}{\textbf {Corollary.}}\em}{}

\newenvironment{definition}{\subsection{}{\textbf {Definition.}}\em}{\smallskip}

\newenvironment{remark}{\subsection{}{\textbf {Remark.}}}{\smallskip}
\newenvironment{remarks}{\subsection{}{\textbf {Remarks.}}}{\smallskip}


\newcommand{\norm}[1]{\lVert#1\rVert}

\renewcommand{\le}{\leqslant}
\renewcommand{\ge}{\geqslant}
\renewcommand{\mid}{\::\:}

\def\bofh{\mathcal B\left(\mathcal H\right)}


\def\bbC{\mathbb C}

\def\bbM{\mathbb M}

\def\cA{\mathcal A}
\def\cB{\mathcal B}
\def\cC{\mathcal C}
\def\cD{\mathcal D}

\def\cH{\mathcal H}

\def\cK{\mathcal K}

\def\cM{\mathcal M}
\def\cN{\mathcal N}

\def\cS{\mathcal S}

\def\cZ{\mathcal Z}



\DeclareMathOperator{\Lat}{Lat}

\def\dotplus{\overset{_{_{\,\,_\bullet}}}{+}}

%
%



%
%

\begin{document}

\title[Abelian, amenable operator algebras are similar to $C^*$-algebras]{Abelian, amenable operator algebras are similar to $C^*$-algebras}

\subjclass[2010]{Primary: 46J05. Secondary: 47L10, 47L30}

%
%

\begin{abstract}
Suppose that $H$ is a complex Hilbert space and that $\cB(H)$ denotes the bounded linear operators on $H$.  We show that every abelian, amenable operator algebra is similar to a $C^*$-algebra.   We do this by showing that if $\cA \subseteq \cB(H)$ is an abelian algebra with the property that given any bounded representation $\varrho: \cA \to \cB(H_\varrho)$ of $\cA$ on a Hilbert space $H_\varrho$, every invariant subspace of $\varrho(\cA)$ is topologically complemented by another invariant subspace of $\varrho(\cA)$, then $\cA$ is similar to an abelian $C^*$-algebra.   
\end{abstract}

%
%

\author[L.W. Marcoux]{Laurent W.~Marcoux${}^1$}
\email{LWMarcoux@uwaterloo.ca}
\address
	{Department of Pure Mathematics\\
	University of Waterloo\\
	Waterloo, Ontario \\
	Canada  \ \ \ N2L 3G1}
\author[A.~I.~Popov]{Alexey I. Popov}
\email{a4popov@uwaterloo.ca}
\address
	{Department of Pure Mathematics\\
	University of Waterloo\\
	Waterloo, Ontario \\
	Canada  \ \ \ N2L 3G1}
\thanks{${}^1$ Research supported in part by NSERC (Canada)}
\date\today

\maketitle

%
%
%
%

\section{Introduction.}

\subsection{}
Let $\cA$ be a Banach algebra and $X$ be a Banach space which is also a bimodule over $\cA$.   We say that $X$ is a \textbf{Banach bimodule} over $\cA$ if the module operations are continuous; that is, if there exists $\kappa > 0$ so that $\norm {a x} \le \kappa \norm {a} \ \norm {x}$, and $\norm {x b} \le \kappa \norm {x}\ \norm {b}$ for all $a, b \in \cA$ and $x \in X$.   

Given a Banach bimodule $X$ over $\cA$, we introduce an action of $\cA$ upon the dual space $X^*$ of $X$ under which $X^*$ becomes a \textbf{dual Banach $\cA$-bimodule}.   This is the so-called \textbf{dual action}:
\[
(a  x^*)(x) = x^*(x a) \mbox{ \ \ \ \ \ and \ \ \ \ \ } (x^*  a)(x) = x^*(a x) \]
for all $a \in \cA$, $x \in X$, $x^* \in X^*$.

A (continuous) \textbf{derivation} from a Banach algebra $\cA$ into a Banach $\cA$-bimodule $X$ is a continuous linear map $\delta: \cA \to X$ satisfying $\delta (a b) = a  \delta(b) + \delta(a)  b$ for all $a, b \in \cA$.   For any fixed $z \in X$, the map $\delta_z : \cA \to X$ defined by $\delta_z (a) = a  z - z  a$ is a derivation with $\norm {\delta_z} \le 2 \norm {z}$.   Derivations of this type are said to be \textbf{inner}, and the algebra $\cA$ is said to be \textbf{amenable} if every continuous derivation of $\cA$ into a dual Banach bimodule $X$ is inner.

The notion of amenability of Banach algebras was introduced by B.~Johnson in his 1972 monograph~\cite{Joh1972}.   He showed that a locally compact topological group $G$ is amenable as a group  - that is, $G$ admits a left translation-invariant mean - if and only if the corresponding group algebra $(L^1(G), \norm {\cdot}_1)$ is amenable as a Banach algebra.  It is a standard and relatively straightforward exercise to show that if $\cA$ and $\cB$ are Banach algebras, $\varphi: \cA \to \cB$ is a continuous homomorphism with dense range, and if $\cA$ is amenable, then $\cB$ is amenable also.   

For $C^*$-algebras acting on a Hilbert space, the notion of amenability coincides with that of \emph{nuclearity}.   A $C^*$-algebra $\cB$ is said to be \textbf{nuclear} if there exists a directed set $\Lambda$ and two families  $\varphi_\lambda: \cB \to \mathbb{M}_{k(\lambda)}(\mathbb{C})$   and   $\psi_\lambda: \mathbb{M}_{k(\lambda)}(\mathbb{C}) \to \cB, \ \lambda \in \Lambda$ of completely positive contractions,  where $k(\lambda) \in \mathbb{N}$ for all $\lambda \in \Lambda$, so that 
\[
\lim_\lambda \norm {\psi_\lambda \circ \varphi_\lambda (b) - b} = 0 \mbox{   for all } b \in \cB. \]

It was shown by A.~Connes~\cite{Con1978}  that every amenable $C^*$-algebra is nuclear, while the converse - namely that every nuclear $C^*$-algebra is amenable - was established by U.~Haagerup~\cite{Haa1983}.

Let $H$ be a complex Hilbert space and denote by $\cB(H)$ the algebra of all bounded linear operators acting on $H$.    It follows from our observation above that if $\cD$ is a nuclear $C^*$-algebra and if $\varrho: \cD \to \cB(H)$ is a continuous representation of $\cD$, then $\overline{\varrho({\cD})}$ is an amenable algebra of operators in $\cB(H)$.     It is also known that any abelian $C^*$-algebra is nuclear (cf.~\cite{BO2008}, Proposition~2.4.2), as is  the algebra $\cK(H)$ of compact operators on $H$ (cf.~\cite{BO2008}, Proposition~2.4.1).    
In 1955, R.V.~Kadison raised the following question, now known as  \textbf{Kadison's Similarity Problem}~\cite{Kad1955}:
Let $\cD$ be a $C^*$-algebra, and suppose that $\varrho: \cD \to \cB(H_\varrho)$ is a continuous representation of $\cD$ on some Hilbert space $H_\varrho$.  For $S \in \cB(H)$ invertible, denote by $\mathrm{Ad}_S: \cB(H) \to \cB(H)$ the map $\mathrm{Ad}_S (X) = S^{-1} X S$.    Does there exist an invertible operator $S \in \cB(H_\varrho)$ so that $\tau:= \mathrm{Ad}_S \circ \varrho$ is a ${}^*$-homomorphism of $\cD$?	

\smallskip	

While the problem in this generality remains unsolved, it has been shown by E.~Christensen~\cite{Chr1981} to admit a positive answer whenever $\cD$ is irreducible (i.e. $\cD$ admits no invariant subspaces) and when $\cD$ is nuclear.   In particular, therefore, it holds when $\cA$ is abelian.     Haagerup~\cite{Haa1981} showed that if $\cD$ admits a \textbf{cyclic} vector, (i.e. there exists $x \in H$ so that $H = \overline{\cD x}$, then again, every continuous representation of $\cD$ is similar to a ${}^*$-representation.  

It follows from Christensen's work that  if a closed subalgebra $\cA \subseteq \cB(H)$ is a homomorphic image of an abelian $C^*$-algebra, then $\cA$ is necessarily amenable (and abelian), and that $\cA$ is similar to a $C^*$-algebra.

The converse problem is the following:

\begin{quote}
{\textbf{Question A.}} Is every amenable algebra of Hilbert space operators  a continuous, homomorphic image of (and hence similar to) a nuclear $C^*$-algebra?
\end{quote}

\smallskip
This problem has circulated since the 1980s.  It has been ascribed to Pisier, to Curtis and Loy, to \v Se\u\i nberg, and to Helemskii, amongst others.  For certain special classes of algebras, the question has been answered affirmatively.

Observe that if an amenable algebra $\cA \subseteq \cB(H)$ is similar to a $C^*$-algebra, then it must necessarily be semisimple.  In that regard, it is interesting to note that C.J.~Read~\cite{Rea2000} has constructed an example of an abelian, radical, amenable Banach algebra.      As a consequence of Corollary~\ref{cor3.2} below, the only continuous representation of Read's algebra on a Hilbert space is the trivial representation.  Thus ours is very much a result about amenable, abelian operator algebras, as opposed to amenable, abelian Banach algebras.
\smallskip

%
%

The first positive result with respect to Question~A is due to M.V. \v Se\u\i nberg~\cite{Sei1977}:

\begin{theorem} \label{Seinberg-uniform} \emph{\textbf{[M.V. \v Se\u\i nberg]}}
If $\Omega$ is a compact Hausdorff space and $\cA \subseteq \cC(\Omega)$ is an amenable, uniform algebra that separates points, then $\cA = \cC(\Omega)$.
\end{theorem}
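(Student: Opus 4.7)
The strategy is to argue by contradiction.  Suppose $\cA \subsetneq \cC(\Omega)$ is an amenable, uniform algebra that separates points.  Using the Gelfand transform together with the separating hypothesis, I identify $\Omega$ with the maximal ideal space $M(\cA)$, so that every character of $\cA$ is a point evaluation $\hat\omega: f \mapsto f(\omega)$ and every maximal ideal is $M_\omega := \ker\hat\omega$, of codimension one.

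The decisive step is to exploit amenability to produce, for each $\omega \in \Omega$, a uniformly bounded approximate identity in $M_\omega$.  Johnson's theorem guarantees a bounded approximate diagonal $d_\lambda = \sum_i a_i^\lambda \otimes b_i^\lambda \in \cA \hat\otimes \cA$ satisfying $\pi(d_\lambda) \to 1$ and $a \cdot d_\lambda - d_\lambda \cdot a \to 0$ for every $a \in \cA$.  Forming the partial contraction $e_\omega^\lambda := \sum_i \hat\omega(a_i^\lambda)\, b_i^\lambda \in \cA$ and applying $\hat\omega \otimes \mathrm{id}$ to the two diagonal relations yields $\hat\omega(e_\omega^\lambda) \to 1$ and $e_\omega^\lambda g - \hat\omega(g)\, e_\omega^\lambda \to 0$ for every $g \in \cA$.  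A short commutative manipulation then shows that $\tilde e_\omega^\lambda := \hat\omega(e_\omega^\lambda)\cdot 1 - e_\omega^\lambda$ lies in $M_\omega$ and is a uniformly bounded approximate identity for that ideal.

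Next I translate this into peaking data.  The net $u_\omega^\lambda := 1 - \tilde e_\omega^\lambda$ sits in $\cA$, is uniformly bounded, satisfies $u_\omega^\lambda(\omega) = 1$, and obeys $\norm{u_\omega^\lambda\, g}_\infty \to 0$ for every $g \in M_\omega$.  Since $\cA$ separates points and contains the constants, for each $\omega' \neq \omega$ one can pick $g \in M_\omega$ with $g(\omega') = 1$; evaluating $u_\omega^\lambda g$ at $\omega'$ forces $u_\omega^\lambda(\omega') \to 0$.  Hence every $\omega \in \Omega$ is a bounded $p$-point for $\cA$, and a dominated-convergence argument against an arbitrary representing measure for $\hat\omega$ forces that measure to be $\delta_\omega$.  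Equivalently, the Choquet boundary of $\cA$ equals the whole of $\Omega$.

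The argument is closed by invoking a classical consequence of Bishop's antisymmetric decomposition theorem: a uniform algebra whose Choquet boundary coincides with its spectrum must equal $\cC$ of that spectrum.  This contradicts the assumption $\cA \subsetneq \cC(\Omega)$.  The main technical obstacle lies in the partial-contraction construction turning the abstract amenability datum into a controlled bounded approximate identity in every maximal ideal; once that is in hand, the subsequent peaking statement and the appeal to Bishop's theorem are routine applications of classical uniform-algebra machinery.
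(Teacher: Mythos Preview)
The paper does not supply a proof of this theorem at all: it is quoted, with attribution to \v{S}e\u{\i}nberg and a citation to~\cite{Sei1977}, as background for the subsequent work on total reduction algebras.  So there is no in-paper argument to compare your attempt against.

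Evaluating your argument on its own merits: the first three paragraphs are correct and essentially standard.  Extracting, from a bounded approximate diagonal, a uniformly bounded approximate identity $(\tilde e_\omega^\lambda)$ in each maximal ideal $M_\omega$ is exactly the right move, and your computation that the associated net $u_\omega^\lambda = 1 - \tilde e_\omega^\lambda$ peaks at~$\omega$ with a bound independent of~$\omega$, hence that every point of~$\Omega$ lies in the Choquet boundary of~$\cA$, is sound.

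The genuine gap is the last paragraph.  The assertion that ``a uniform algebra whose Choquet boundary coincides with its spectrum must equal $\cC$ of that spectrum'' is precisely the \emph{peak-point conjecture}, and it is false: B.~Cole (1968) constructed a proper uniform algebra on a compact metric space in which every point of the maximal ideal space is a peak point.  Bishop's antisymmetric decomposition does not deliver this implication; it only reduces the problem to the maximal sets of antisymmetry, and those can be nontrivial even when the Choquet boundary is everything.  Thus your closing appeal to Bishop does not finish the proof.

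What is missing is a second use of amenability.  One route is to note that each restriction $\cA|_E$ to a maximal antisymmetric set~$E$ is again amenable (as a quotient), and then to exploit the virtual diagonal on the restricted algebra---not merely the Choquet-boundary conclusion---to force $E$ to be a singleton.  Another route, closer to \v{S}e\u{\i}nberg's original, works directly with an annihilating measure $\mu \in \cA^\perp$ and the image of the approximate diagonal in $\cC(\Omega\times\Omega)$, using the uniform bound on the $D_\lambda$ together with Fubini-type manipulations to force $\mu = 0$.  In either case the uniform bound you obtained is exactly the extra leverage that Cole's example lacks, but it must be deployed by a further argument rather than by invoking Bishop alone.
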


%
%

For $T \in \cB(H)$, we denote by $\cA_T$ the norm-closed unital subalgebra of $\cB(H)$ generated by~$T$.

\begin{theorem} \label{Willis} \emph{\textbf{[G. Willis]}} ~\cite{Wil1995}
Let $K \in \cK(H)$.  If $\cA_K$ is amenable, then $K$ is similar to a diagonal operator.
\end{theorem}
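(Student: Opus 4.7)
The plan is to combine Riesz--Schauder theory for the compact operator $K$ with the structural consequences of amenability, applied both to finite-dimensional restrictions of $\cA_K$ and to the ambient algebra itself. By Riesz--Schauder, $\sigma(K) = \{0\} \cup \{\lambda_n\}_{n}$ where each nonzero $\lambda_n$ is an eigenvalue of finite algebraic multiplicity and, in the infinite case, $\lambda_n \to 0$. Let $N_n := \ker(K - \lambda_n I)^{k_n}$ be the generalized eigenspace at $\lambda_n$, and let $E_n$ be the Riesz spectral idempotent onto $N_n$ obtained by integrating $(zI - K)^{-1}$ around a small loop enclosing only $\lambda_n$; each $E_n$ lies in $\cA_K$ by the holomorphic functional calculus.

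The first substantive step is to rule out Jordan blocks. Since $N_n$ is invariant under $\cA_K$, the restriction $a \mapsto a|_{N_n}$ is a continuous algebra homomorphism $\cA_K \to \cB(N_n)$ whose image $\cA_K|_{N_n}$ is a finite-dimensional commutative subalgebra; as a continuous image (with dense range onto its image) of an amenable algebra, $\cA_K|_{N_n}$ is itself amenable. But a finite-dimensional commutative amenable Banach algebra must be semisimple: one verifies directly that $\bbC[z]/(z^2)$ admits a non-inner derivation into itself as a (dual) bimodule. Thus $\cA_K|_{N_n}$ contains no nonzero nilpotent, so the nilpotent operator $(K - \lambda_n I)|_{N_n}$ vanishes, and $N_n = \ker(K - \lambda_n I)$ is an honest eigenspace.

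The main obstacle is the next step: establishing a uniform bound $\sup_{F} \bignorm{\sum_{n \in F} E_n} < \infty$ over finite subsets $F$ of the index set. This is where amenability must be genuinely invoked on the full algebra $\cA_K$, not merely on its finite-dimensional restrictions. My plan is to exploit a virtual diagonal $M \in (\cA_K \hat\otimes \cA_K)^{**}$ for $\cA_K$ --- whose existence is equivalent to amenability --- together with the pairwise orthogonality $E_n E_m = \delta_{nm} E_n$, averaging against $M$ to control the norms of the partial sums $P_F := \sum_{n \in F} E_n$; compactness of $K$ (and hence rapid decay of $\lambda_n$) provides the tail control needed to pass from finite partial sums to the countable family.

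Finally, once the Boolean algebra $\{E_n\}$ of spectral idempotents is uniformly bounded on a Hilbert space, a classical similarity theorem of Wermer (cf.\ also Dixmier-type results on uniformly bounded amenable families) produces an invertible $S \in \cB(H)$ such that $\{S^{-1} E_n S\}$ consists of pairwise orthogonal self-adjoint projections. Combined with the spectral representation $K = \sum_n \lambda_n E_n$ in the strong operator topology --- valid once all $N_n$ are genuine eigenspaces --- this yields that $S^{-1} K S$ is a diagonal compact operator with eigenvalue list $\{0\} \cup \{\lambda_n\}$, as required.
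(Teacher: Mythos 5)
The paper does not prove this statement; it quotes it from Willis's paper, so I am judging your argument entirely on its own merits. Your skeleton is the standard and correct one: Riesz idempotents $E_n$, killing the nilpotent parts of $K$ on the generalized eigenspaces via amenability of the finite-dimensional restrictions, a uniform bound on the Boolean algebra of spectral idempotents, and then simultaneous orthogonalization of a uniformly bounded family of commuting idempotents on Hilbert space. Steps one, two and four are essentially sound. The decisive gap is step three. The uniform bound $\sup_F \bignorm{\sum_{n\in F}E_n}<\infty$ \emph{is} the theorem, and what you offer there is a declaration of intent (``my plan is to exploit a virtual diagonal \dots averaging against $M$'') with no mechanism: you do not say what you pair $M$ against, why the orthogonality $E_nE_m=\delta_{nm}E_n$ interacts with $M$ to produce a bound independent of $F$, and the appeal to ``rapid decay of $\lambda_n$'' is a red herring --- the norms of Riesz idempotents of a compact operator are not controlled by the sizes of the eigenvalues, and badly skewed eigenspaces are precisely the phenomenon to be excluded. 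A concrete route in the spirit of this paper is Gifford's projection-constant theorem (Theorem~\ref{projectionconstant}): amenability gives the total reduction property, hence a single constant $\kappa$ bounding \emph{some} commutant projection onto each invariant subspace $\bigoplus_{n\in F}N_n$; but one must still argue that this projection can be taken to be $P_F$ itself (its kernel, not just its range, has to be matched to the complementary spectral subspace), and none of this appears in your write-up.

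There is a second genuine gap at the end. The representation $K=\sum_n\lambda_n E_n$ in the strong operator topology is \emph{not} ``valid once all $N_n$ are genuine eigenspaces'': the closed span of the eigenspaces for the nonzero eigenvalues need not be all of $H$, and on a complementary invariant subspace $K$ restricts to a compact quasinilpotent operator which is invisible to every $E_n$ and which a priori may be nonzero (a Volterra-type summand has no nonzero eigenvalues at all). Killing this residual part requires amenability once more --- for instance, the restriction of $\cA_K$ to that subspace is an amenable abelian operator algebra generated by a quasinilpotent, which must be zero by an inequality of the form $\norm{S}^2\le\mu\norm{S^2}$ as in Theorem~\ref{quasinilpotent} and Remark (b) following it. You assert the spectral representation without addressing this, so even granting step three the proof as written does not close.
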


The norm-closed algebra generated by a compact diagonal operator is self-adjoint.   As such, an immediate corollary to this Theorem is that if $K \in \cK(H)$ and $\cA_K$ is amenable, then $\cA_K$ is similar to a $C^*$-algebra.

%
%

P.C.~Curtis and R.J.~Loy~\cite{CL1995} have proven that if $\cA \subseteq \cB(H)$ is amenable and generated by its normal elements, then $\cA = \cA^*$ is a $C^*$-algebra.

In~\cite{FFM2005, FFM2007}, D.~Farenick, B.E.~Forrest and the first author showed that if $T \in \cB(H)$ generates an amenable algebra $\cA_T$, and if $H$ admits an orthonormal basis $\{ e_n\}_{n=1}^\infty$ under which the matrix $[T] := [t_{ij}] = [ \langle T e_j, e_i \rangle]$ is upper triangular, then again, $T$ is similar to a normal operator $N$ with \textbf{Lavrientieff} spectrum.   That is, the spectrum $\sigma(T)$ of $T$ does not have interior, and it does not disconnect the complex plane.  As was shown by Lavrentieff~\cite{Lav1936}, this is precisely the property of the spectrum needed to ensure that the algebra of polynomials on $\sigma(T)$ is  dense in the space of continuous functions on $\sigma(T)$ with respect to the uniform norm, which implies that the algebra $\cA_N$ generated by $N$ is a $C^*$-algebra, and hence that $\cA_T$ is similar to $C^*(N)$. 

More recently, Y.~Choi~\cite{Cho2013} has shown (amongst other things) that  if $\cA$ is a closed, commutative  amenable subalgebra of a finite von Neumann algebra $\cM$, then $\cA$ must be similar to a $C^*$-algebra.     

%
%

In a recent preprint of Y.~Choi, I.~Farah, and N.~Ozawa~\cite{CFO2013p},  Question A above has finally been resolved (in the negative).   There, the authors construct an ingenious example of a nonseparable and nonabelian amenable subalgebra of  $\ell_\infty(\mathbb{N}, \mathbb{M}_2(\mathbb{C}))$ which is not isomorphic to a nuclear $C^*$-algebra.  As they point out, their counterexample is ``inevitably nonseparable", and as we shall see, ``inevitably nonabelian".  The existence or nonexistence of a separable, amenable operator algebra which is not similar to a $C^*$-algebra remains an open problem.

%
%

\bigskip

\subsection{} \label{TRP}
The current work is motivated by this problem in the case where the  algebra in question is \emph{abelian}.   Our main result is Theorem~\ref{MainTheorem}, which states that
\begin{quotation}
every \emph{abelian}, amenable operator algebra is similar to a (necessarily abelian, hence nuclear) $C^*$-algebra.
\end{quotation}
This result stands in stark contrast to the counterexample of Choi, Farah and Ozawa mentioned above.      
Our approach, however, takes us away from the notion of amenability proper, and is heavily influenced by the remarkable thesis of J.A.~Gifford~\cite{Gif1997} and his subsequent paper~\cite{Gif2006}.    

\smallskip

A particularly useful device in studying an operator algebra $\cA$ (i.e. a closed subalgebra of $\cB(H)$ for some Hilbert space $H$) is to examine its lattice of closed invariant subspaces, $\mathrm{Lat}\, \cA$.   It is elementary to see that the lattice $\mathrm{Lat}\, \cD$ of a $C^*$-algebra $\cD \subseteq \cB(H)$ has the property that if $M \in \mathrm{Lat}\, \cD$, then $M^\perp \in \mathrm{Lat}\, \cD$; in other words, every element of $\mathrm{Lat}\, \cD$ is orthogonally complemented.   We shall write $H = M \oplus M^\perp$ to denote the \emph{orthogonal} direct sum of the subspace $M$ and of $M^\perp$.    Given two closed subspaces $V$ and $W$ of $H$, we shall reserve the notation $H = V \dotplus W$ to mean that $V$ and $W$ are \textbf{topological complements} in $H$; that is, $H = V + W$, while $V \cap W = \{ 0\}$.

Suppose now that $\cD$ is a nuclear $C^*$-algebra, that $\varrho: \cD \to \cB(H_\varrho)$ is a continuous representation of $\cB$ and that $\cA := \overline{\varrho(\cD)}$.   By Christensen's Theorem~\cite{Chr1981}, there exists an invertible operator $S \in \cB(H_\varrho)$ so that $\tau := \mathrm{Ad}_S \circ \varrho$ is a ${}^*$-homomorphism.   From this it follows that the range of $\varrho$ is closed and that $\cB := \tau(\cD) = S^{-1} \cA S$ is a $C^*$-algebra.   A quick calculation shows that $\mathrm{Lat}\, \cA = S^{-1} \mathrm{Lat}\, \cB$.   

As such, given $M \in \mathrm{Lat}\, \cA$, we have that $S M \in \mathrm{Lat}\, \cB$, and thus $(S M)^\perp \in \mathrm{Lat}\, \cB$.   But then $H = S^{-1} H = S^{-1} ( (SM) \oplus (SM)^\perp ) = M \dotplus S^{-1} (S M)^\perp$ shows that $M$ is topologically complemented in $\mathrm{Lat}\, \cA$ by the element $S^{-1} (SM)^\perp$ of $\mathrm{Lat}\, \cA$.

We say that an operator algebra $\cA \subseteq \cB(H)$ has the \textbf{reduction property} if every element of its invariant subspace lattice $\mathrm{Lat}\, \cA$ is topologically complemented in $\mathrm{Lat}\, \cA$.    The above argument shows that if $\cA$ is the homomorphic image of a nuclear $C^*$-algebra, or more generally if $\cA$ is similar to a $C^*$-algebra, then $\cA$ has the reduction property.

That the lattice of invariant subspaces of an operator algebra being complemented reveals a great deal of structure about the algebra and its generators has been the theme of more than one paper.       For example, C.K.~Fong~\cite{Fon1977} closely examined the relationship between the reduction property of an operator algebra $\cA$ and the boundedness of certain graph transformations for $\cA$.   Later, S.~Rosenoer~\cite{Ros1987, Ros1993} showed amongst other things that if $T \in \cB(H)$ is an operator for which $\cA_T$ has the reduction property, and if $T$ commutes with an injective compact operator with dense range, then $T$ is similar to a normal operator.   Furthermore, he showed that every unital, strongly closed operator algebra $\cA$ with the reduction property and with the property that the ranges of the compact operators in $\cA$ span the underlying Hilbert space is reflexive:  that is, $\cA$ coincides with the algebra $\mathrm{Alg}\, \mathrm{Lat}\, \cA$ of all operators on $H$ which leave invariant each element of $\mathrm{Lat}\, \cA$.   (Both Fong's and Rosenoer's results are actually stated for operators on a Banach space - we shall not require those results here.)

In his thesis~\cite{Gif1997} (alternatively, see~\cite{Gif2006}), J.A.~Gifford defined a stronger version of the reduction property which he refers to as the \emph{total reduction property}:

%
%

\begin{definition} \label{TRP2}
Let $\cA$ be a Banach algebra of operators acting on a Hilbert space $H$.   We say that $\cA$ has the \textbf{total reduction property (TRP)} if, for every continuous representation $\varrho: \cA \to \cB(H_\varrho)$ of $\cA$ as bounded linear operators on a Hilbert space $H_\varrho$, we have that the operator algebra $\overline{\varrho(\cA)}$ has the reduction property as a subalgebra of $\cB(H_\varrho)$.   
\end{definition}

Following~\cite{FFM2005}, we shall say that an operator $T$ has the \textbf{total reduction property} if $\cA_T$ does.

\bigskip

%
%

Insofar as we are concerned, a particularly attractive relationship exists between the total reduction property and amenability:

\begin{theorem} \label{Gifford_totally_reductive} \emph{\textbf{[J.A.~Gifford]}}~\cite{Gif2006}
If $\cA \subseteq \cB(H)$ is an amenable Banach algebra of operators on a Hilbert space $H$, then $\cA$ has the total reduction property.
\end{theorem}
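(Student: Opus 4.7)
The strategy is to use amenability to ``correct'' the orthogonal projection onto an invariant subspace into a bounded projection that commutes with the algebra. Since the image of an amenable Banach algebra under a continuous homomorphism with dense range is again amenable, $\cB := \overline{\varrho(\cA)}$ is amenable, so it suffices to show that any amenable subalgebra $\cB \subseteq \cB(K)$ of a Hilbert space $K$ has the reduction property. Fix $M \in \Lat\cB$ and let $P_0$ denote the orthogonal projection of $K$ onto $M$. Invariance of $M$ is equivalent to $bP_0 = P_0 b P_0$ for every $b \in \cB$, and consequently the commutator
\[
\delta(b) := bP_0 - P_0 b = -P_0\, b\, (I - P_0)
\]
lies in the ``corner'' $Y := P_0\, \cB(K)\, (I - P_0)$; moreover $\delta : \cB \to Y$ is a continuous derivation with $\norm{\delta} \le 2$.

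The technical heart of the argument is to verify that $Y$ is a dual Banach $\cB$-bimodule under its natural actions. The subspace $Y$ is norm-closed and weak-$*$ closed in $\cB(K)$, being the range of the weak-$*$ continuous idempotent $T \mapsto P_0 T (I - P_0)$, and both the left action $y \mapsto by$ and the right action $y \mapsto yb$ (for $b \in \cB$) preserve $Y$; the nontrivial check, for the right action, relies again on $(I - P_0) b P_0 = 0$. Because these actions are weak-$*$ continuous, they arise as the dual actions of a Banach $\cB$-bimodule structure on the quotient of the trace-class operators on $K$ by the annihilator of $Y$.

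By amenability of $\cB$, the derivation $\delta$ is inner: there exists $z \in Y$ with $\delta(b) = bz - zb$ for every $b \in \cB$, whence $Q := P_0 - z$ commutes with $\cB$. The key point is that the condition $z \in Y$ forces the block matrix of $z$ relative to $K = M \oplus M^\perp$ to have the form
\[
z = \begin{pmatrix} 0 & c \\ 0 & 0 \end{pmatrix}
\]
for some bounded $c : M^\perp \to M$, so
\[
Q = \begin{pmatrix} I & -c \\ 0 & 0 \end{pmatrix}
\]
is automatically a bounded idempotent with range exactly $M$. Thus $N := \ker Q$ is a closed topological complement of $M$ in $K$, and since $I - Q$ also commutes with $\cB$ we have $\cB N \subseteq N$. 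This exhibits the required invariant complement of $M$, establishing the reduction property for $\cB$ and hence the total reduction property for $\cA$. The main obstacle is the dual-bimodule bookkeeping of paragraph two; once that is in hand, extracting the complementary invariant subspace from $z$ reduces to an algebraic manipulation of $2 \times 2$ operator matrices.
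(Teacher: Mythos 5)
The paper does not prove this statement; it is quoted from Gifford~\cite{Gif2006}, and your argument is precisely the standard one given there: turn the off-diagonal corner $\delta(b)=bP_0-P_0b=-P_0b(I-P_0)$ into a derivation valued in the weak-$*$ closed (hence dual) sub-bimodule $P_0\,\cB(K)\,(I-P_0)$ of $\cB(K)=\cT(K)^*$, make it inner by amenability, and read off the commuting idempotent $Q=P_0-z$ with range $M$. The proof is correct as written, including the two points that actually need checking (that the corner is a sub-bimodule, which uses $(I-P_0)bP_0=0$ on both sides, and that it is weak-$*$ closed so the quotient of the trace class by its pre-annihilator serves as predual).
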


%
%

\bigskip

Armed with this notion, Gifford obtained a far-reaching and beautiful generalization of Willis's result.

\begin{theorem} \label{Gifford} \emph{\textbf{[J.A.~Gifford]}}~\cite{Gif2006}
If $\cA \subseteq \cK(H)$ is a subalgebra of compact operators, then $\cA$ has the total reduction property if and only if  $\cA$ is similar to a $C^*$-algebra.    As a consequence, every amenable subalgebra of $\cK(H)$ is similar to a $C^*$-algebra.
\end{theorem}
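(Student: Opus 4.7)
My plan is to treat the two directions of the equivalence separately, and then deduce the consequence from Theorem~\ref{Gifford_totally_reductive}.

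\textbf{The easy direction} (``$\cA$ similar to a $C^*$-algebra $\Rightarrow$ TRP''). Suppose $\cA = S^{-1}\cB S$ for some $C^*$-subalgebra $\cB \subseteq \cK(H)$. By the standard structure theorem, any $C^*$-subalgebra of $\cK(H)$ is a $c_0$-direct sum of finite-dimensional matrix algebras, and is therefore nuclear. Given any continuous representation $\varrho : \cA \to \cB(H_\varrho)$, the map $\varrho \circ \mathrm{Ad}_{S^{-1}}$ is a continuous representation of the nuclear $C^*$-algebra $\cB$, so by Christensen's theorem it is similar to a $*$-representation. The image of a $*$-representation of a $C^*$-algebra has the reduction property, since orthogonal complements of invariant subspaces are again invariant; transferring back through the two similarities yields the reduction property for $\overline{\varrho(\cA)}$.

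\textbf{The substantive direction} (``TRP $\Rightarrow$ similar to a $C^*$-algebra''). Assume $\cA \subseteq \cK(H)$ has TRP. The first step is to exploit compactness: for each nonzero $T \in \cA$, Riesz theory furnishes finite-dimensional spectral subspaces at each nonzero point of $\sigma(T)$, and taking the $\cA$-invariant closure of such a subspace yields a finite-dimensional invariant subspace of $\cA$ (its dimension controlled by the finite-rank spectral idempotents of $T$ together with the algebraic structure of $\cA$). Hence $\cA$ admits an abundant lattice of finite-dimensional invariant subspaces, and TRP complements each of them. In particular every irreducible quotient representation of $\cA$ is finite-dimensional, and the restriction of $\cA$ to the isotypic component associated with such a quotient acts, up to a local similarity, as $M_{n_\alpha}(\bbC) \otimes I_{m_\alpha}$. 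Applying TRP to a universal representation (the direct sum of all irreducible quotient representations) and invoking complementation, one obtains a direct-sum decomposition $H = \bigoplus_\alpha H_\alpha$ together with local similarities $S_\alpha$ on each $H_\alpha$ realizing the corresponding piece of $\cA$ as a matrix algebra.

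\textbf{The main obstacle} is to assemble the $S_\alpha$'s into a single bounded, invertible operator $S = \bigoplus_\alpha S_\alpha$ on $H$; equivalently, to establish the uniform bound $\sup_\alpha ( \norm{S_\alpha} + \norm{S_\alpha^{-1}} ) < \infty$. This uniform bound is precisely what the \emph{total} (as opposed to the mere) reduction property is designed to deliver: TRP supplies, for each continuous representation, a bounded linear splitting providing the invariant complement, and an amplification/uniform-boundedness argument applied across the amplified representation $\bigoplus_\alpha \varrho_\alpha$ extracts a uniform bound on the splitting norms, which in turn bounds the $S_\alpha$ uniformly. Conjugating $\cA$ by the resulting $S$ then realizes $\cA$ as a $c_0$-sum of matrix algebras, i.e.\ a $C^*$-algebra. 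The amenable consequence is immediate: by Theorem~\ref{Gifford_totally_reductive}, amenability of $\cA$ implies TRP, which by the equivalence just established implies $\cA$ is similar to a $C^*$-algebra.
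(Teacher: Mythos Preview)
The paper does not prove this theorem; it is quoted as a result of Gifford with citations to~\cite{Gif1997, Gif2006}, and no argument is supplied here. There is therefore no ``paper's own proof'' to compare your attempt against.

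Assessing your proposal on its own merits: the easy direction is correct and essentially reproduces the discussion surrounding Section~\ref{TRP}. Your hard direction has the right overall architecture---reduce to finite-dimensional irreducible pieces, find local similarities, then use the uniform projection bound (Theorem~\ref{projectionconstant}) to glue them---and this is indeed the spirit of Gifford's argument. But as written it is an outline with genuine gaps rather than a proof. The specific step ``taking the $\cA$-invariant closure of such a [finite-dimensional Riesz spectral] subspace yields a finite-dimensional invariant subspace of $\cA$'' is not justified and is false in general: the Riesz projection for a single $T \in \cA$ need not commute with the rest of $\cA$, so the smallest $\cA$-invariant subspace containing a given spectral subspace of $T$ can be infinite-dimensional. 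Gifford's route to finite-dimensional irreducible pieces is different and considerably more delicate. Similarly, the claims that $H$ decomposes as $\bigoplus_\alpha H_\alpha$, that each isotypic piece is realized up to a local similarity as a full matrix block, and that the uniform bound on the $S_\alpha$ can be extracted from an amplification argument, each require substantial work that Gifford carries out over several pages; your sketch names these steps but does not execute them. The deduction of the amenable consequence from Theorem~\ref{Gifford_totally_reductive} is correct.
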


\smallskip

In fact, Gifford proved this result under a slightly weaker hypothesis for $\cA$, namely that $\cA$ has the \emph{complete reduction property}, which is the statement that the algebra $\cA^{(\infty)} := \{ A \oplus A \oplus \cdots: A \in \cA \} \subseteq \cB(H^{(\infty)})$ has the reduction property.  

%
%

 \bigskip

Suppose that an abelian algebra $\cA \subseteq \cB(H)$ is similar to a $C^*$-algebra $\cD$, say $\cA = S^{-1} \cD S$ for some invertible operator $S \in \cB(H)$.   Let $\varrho: \cA \to \cB(H_\varrho)$ be a (continuous) representation of $\cA$.  Then $\tau: \cD \to \cB(H_\varrho)$ defined by $\tau (D) = \varrho (S^{-1} D S)$ defines a continuous representation of $\cD$.   The argument of Section~\ref{TRP} above shows that the lattice $\mathrm{Lat}\, \overline{\tau(D)} = \mathrm{Lat}\, \overline{\varrho(A)}$ is topologically complemented, and thus  $\cA$ has the TRP.   

\bigskip

Our main result, Theorem~\ref{MainTheorem} establishes the converse:   if $\cA \subseteq \cB(H)$ is an abelian Banach algebra which has the TRP, then $\cA$ is similar to a $C^*$-algebra.    In particular, this confirms a conjecture of Gifford~\cite{Gif2006} in the abelian setting.

\vskip 0.5 cm

It is a pleasure for the authors to acknowledge the helpful conversations, insights and inspirations provided to us by Heydar Radjavi and Dilian Yang.   
%
%
%
%

\section{The main result.}

\subsection{} \label{intro-section2}
Our ultimate goal is to show that if an abelian operator algebra $\cA \subseteq \cB(H)$ has the total reduction property, and if $\Sigma_\cA$ denotes the maximal ideal space of $\cA$,  then the Gelfand Transform $\Gamma: \cA \to \cC(\Sigma_\cA)$ is a topological isomorphism.   This approach is motivated by the following.

%
%


%
%

In his thesis, J.A.~Gifford provides the following analogue of \v{S}e\u{\i}nberg's Theorem~\ref{Seinberg-uniform} for  total reduction algebras (part (a) below).   As he mentions there, his proof owes much to the original.

\begin{theorem} \label{Gifford1997} \emph{\textbf{[J.A.~Gifford]}}~\cite{Gif1997}
Let $\cA \subseteq \bofh$ be an abelian, total reduction algebra.
\begin{enumerate}
	\item[(a)]
	If $\cA$ is contained in an abelian $C^*$-algebra $\cB \subseteq \bofh$, then $\cA$ is self-adjoint.
	\item[(b)]
	If $\cA$ is isomorphic to a closed subalgebra of an abelian $C^*$-algebra, then $\cA$ is similar to a $C^*$-algebra.
\end{enumerate}
\end{theorem}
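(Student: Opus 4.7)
My plan is to view part (b) as a consequence of part (a) together with Christensen's solution of the similarity problem for nuclear $C^*$-algebras, and to attack (a) directly using the TRP. For (b), suppose $\Phi: \cA \to \cA_0$ is a Banach algebra isomorphism onto a closed subalgebra $\cA_0$ of an abelian $C^*$-algebra $\cD$ acting on some Hilbert space $K$. Because the TRP is a purely Banach-algebraic property — continuous representations of $\cA$ and of $\cA_0$ being in bijective correspondence via $\Phi$ and $\Phi^{-1}$ — $\cA_0$ also has the TRP. Granting (a), the subalgebra $\cA_0 \subseteq \cD$ is then self-adjoint, and being already closed, $\cA_0$ is itself an abelian $C^*$-algebra and hence nuclear. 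Viewing $\Phi^{-1}: \cA_0 \to \cB(H)$ as a continuous representation of this nuclear $C^*$-algebra, Christensen's theorem supplies an invertible $S \in \cB(H)$ such that $\tau := \mathrm{Ad}_S \circ \Phi^{-1}$ is a ${}^*$-homomorphism, whence $S^{-1} \cA S = \tau(\cA_0)$ is a $C^*$-algebra.

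For (a), the key observation is that since $\cB$ is an abelian $C^*$-algebra containing $\cA$, the involution $a \mapsto a^*$ restricts to $\cA$ and, by commutativity of $\cB$, defines a continuous algebra homomorphism $\sigma: \cA \to \cB \subseteq \cB(H)$; in particular $\sigma$ is a continuous representation of $\cA$ on $H$. I would apply the TRP to the direct sum representation $\pi = \mathrm{id} \oplus \sigma: \cA \to \cB(H \oplus H)$, $\pi(a) = a \oplus a^*$. The subspace $H \oplus \{0\}$ is $\pi$-invariant and so admits an invariant topological complement, which must be the graph of a bounded operator $T: H \to H$; its invariance translates into the intertwining identity $aT = T a^*$ for all $a \in \cA$. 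The plan is then, using further amplifications and the TRP repeatedly, to convert this intertwining datum into the algebraic conclusion $a^* \in \cA$. To this end I would realize $\cB$ via Gelfand as $C(K)$ with uniform norm so that $\cA$ becomes a closed subalgebra of the uniform algebra $C(K)$, reduce to the case where $\cA$ is unital and separates points of $K$ (by adjoining an identity and passing to the quotient determined by the equivalence relation $\cA$ induces on $K$), and then run a \v{S}e\u{\i}nberg-type argument in which the TRP plays the role that amenability plays in Theorem~\ref{Seinberg-uniform}: it supplies the complemented invariant subspaces needed to manufacture enough idempotents in $\cA$ to force $\cA = C(K) = \cB$.

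The main obstacle is precisely this last step: the intertwiner relation $aT = T a^*$ does not by itself force $T$ to be nonzero — let alone invertible — and so the algebraic conclusion $a^* \in \cA$ is not immediate. In \v{S}e\u{\i}nberg's original argument the corresponding move is executed by evaluating a virtual diagonal of the amenable algebra, a tool which the TRP does not directly supply. I expect the hard part to be replacing this device with a careful inductive exploitation of the TRP on amplified representations $\mathrm{id} \oplus \sigma^{(n)}$, together with the characters of $\cB$ and the spectral idempotents available inside the ambient abelian $C^*$-algebra; this is where the analytic bulk of the proof will lie.
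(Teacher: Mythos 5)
First, a point of reference: the paper offers no proof of this theorem. It is quoted from Gifford's thesis \cite{Gif1997}, with only the remark that his argument for part (a) ``owes much to'' \v{S}e\u{\i}nberg's original proof of Theorem~\ref{Seinberg-uniform}. So your proposal is being measured against the known argument in the literature rather than a proof printed here. Your reduction of (b) to (a) is correct and is the standard route: the TRP transfers across the isomorphism $\Phi$ because continuous representations of $\cA_0$ pull back to continuous representations of $\cA$; part (a) then makes $\cA_0$ an abelian, hence nuclear, $C^*$-algebra, and Christensen's theorem applied to the continuous representation $\Phi^{-1}$ yields the similarity.

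Part (a), however, contains a genuine gap, and the specific opening move fails. In the representation $\pi(a) = a \oplus a^*$, the invariant subspace $H \oplus \{0\}$ \emph{always} admits the invariant --- indeed orthogonal --- complement $\{0\} \oplus H$, since $(a \oplus a^*)(0,y) = (0, a^*y)$. The TRP applied to this subspace is therefore consistent with the complement being $\{0\} \oplus H$, i.e.\ with $T = 0$, and the intertwining relation $aT = Ta^*$ carries no information whatsoever; this is exactly the degeneracy you flag, but it is not merely a technical worry --- it shows this particular representation cannot be the engine of the proof. You then defer the real content, converting intertwining or idempotent data into the conclusion $a^* \in \cA$, to an unspecified ``\v{S}e\u{\i}nberg-type argument,'' which is precisely the part that must be supplied. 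The actual argument runs through duality rather than through graphs: after reducing to a unital, point-separating closed subalgebra $\cA$ of $\cC(\Omega)$, one shows that every regular Borel measure $\mu$ annihilating $\cA$ vanishes, by letting $\cA$ act by multiplication on $L^2(\abs{\mu})$ and using the TRP-complemented invariant subspace $\overline{\cA}$ (the $H^2$-space of $\mu$); since the commutant of $\cC(\Omega)$ acting on $L^2(\abs{\mu})$ is $L^\infty(\abs{\mu})$, the resulting projection is multiplication by an idempotent function, and separation of points forces $\mu = 0$, whence $\cA = \cC(\Omega)$. As it stands, your proposal establishes (b) modulo (a), but does not prove (a).
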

		
%
%

%
%

%
%

The next result, again due to Gifford, shows that operator algebras $\cA$ with the total reduction property have a very rigid invariant subspace lattice \emph{under any continuous representation}.  Following the terminology in~\cite{Gif2006}, we refer to  idempotents in $\cB(H)$ as \emph{projections}, and we refer to self-adjoint projections as \emph{orthogonal projections}.

\bigskip

\begin{theorem} \label{projectionconstant} \emph{\textbf{[J.A.~Gifford]  Lemma~1.7}}~\cite{Gif2006}
Let $\cA$ be an operator algebra with the total reduction property.   Then there exists an increasing function $\kappa: \mathbb{R}^+ \to \mathbb{R}^+$ such that if $\theta: \cA \to \cB(H_\theta)$ is a continuous representation of $\cA$ and if $M \subseteq H_\theta$ is an invariant subspace for $\theta(\cA)$, then there exists a projection $E \in (\theta(\cA))^\prime = \{ T \in \cB(H_\theta): \theta(A) T = T \theta(A) \mbox{ for all } A \in \cA\}$ such that $\mathrm{ran}\, E = M$ and $\norm {E} \le \kappa(\norm{\theta})$.
\end{theorem}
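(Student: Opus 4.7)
The plan is to define $\kappa(n)$ as a natural supremum and show it is finite via a direct-sum argument. For any continuous representation $\theta:\cA\to\cB(H_\theta)$ and any $M\in\Lat\,\theta(\cA)$, the TRP applied to $\theta$ supplies an invariant complement $N$ with $H_\theta=M\dotplus N$; the associated idempotent onto $M$ along $N$ is bounded (closed graph theorem) and commutes with $\theta(\cA)$, so the set of norms $\bigl\{\norm{E} : E=E^2\in\theta(\cA)',\ \range E=M\bigr\}$ is a nonempty subset of $[1,\infty)$. Define $\kappa(n)$ to be the supremum, over all such pairs $(\theta,M)$ with $\norm{\theta}\le n$, of the infimum of that set; $\kappa$ is automatically nondecreasing, and replacing it by $n\mapsto\kappa(n)+n$ renders it strictly increasing if desired. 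The whole content of the theorem is the claim that $\kappa(n)<\infty$ for every $n>0$.

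Suppose, toward a contradiction, that $\kappa(n)=\infty$ for some $n$. Choose continuous representations $\theta_k:\cA\to\cB(H_{\theta_k})$ with $\norm{\theta_k}\le n$ and $M_k\in\Lat\,\theta_k(\cA)$ such that every commuting idempotent onto $M_k$ has norm at least $k$. On the Hilbert space $H:=\bigoplus_2 H_{\theta_k}$ define $\theta(a):=\bigoplus_k \theta_k(a)$. Since $\norm{\theta(a)}=\sup_k\norm{\theta_k(a)}\le n\norm{a}$, $\theta$ is a continuous representation with $\norm{\theta}\le n$. The closed subspace $M:=\bigoplus_2 M_k$ is invariant under $\theta(\cA)$, and the TRP applied to $\theta$ produces an idempotent $E\in\theta(\cA)'$ with $\range E=M$.

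Let $P_k$ denote the orthogonal projection of $H$ onto $H_{\theta_k}$. Because $\theta$ acts block-diagonally, each $P_k$ commutes with $\theta(\cA)$ and each $H_{\theta_k}$ is invariant under $\theta(\cA)$. Set $E_k:=P_k E|_{H_{\theta_k}}\in\cB(H_{\theta_k})$. For $v\in H_{\theta_k}$ we have $Ev\in M$, hence $P_k Ev\in P_k M=M_k$; and for $v\in M_k\subseteq M$, $Ev=v$ and so $E_k v=v$. Thus $E_k$ is an idempotent with range exactly $M_k$; using that $E$ and $P_k$ each commute with $\theta(a)$ and that $\theta(a)|_{H_{\theta_k}}=\theta_k(a)$, a short computation yields $E_k\in\theta_k(\cA)'$. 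But $\norm{E_k}\le\norm{E}<\infty$, contradicting $\norm{E_k}\ge k$ for all large $k$.

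The decisive observation, and the only genuinely subtle point, is the last step: although the commuting idempotent $E$ delivered by the TRP on the direct sum need not split as $\bigoplus_k E_k$, its block compression $P_k E|_{H_{\theta_k}}$ is nevertheless a commuting idempotent onto $M_k$, precisely because $E$ restricts to the identity on each $M_k\subseteq M$. This rigidity is what converts the purely qualitative TRP into a quantitative uniform bound, and it is the main potential obstacle one must clear to obtain the dependence of the constant on $\norm{\theta}$ alone.
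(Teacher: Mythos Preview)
The paper does not prove this statement; it is quoted verbatim as Lemma~1.7 of Gifford~\cite{Gif2006} and used as a black box. Your argument is the standard direct-sum argument and is correct: the verification that the block compression $E_k=P_kE|_{H_{\theta_k}}$ is again a commuting idempotent with range exactly $M_k$ (using that $E$ fixes $M_k\subseteq M$ pointwise and that $P_k$ commutes with the block-diagonal $\theta(\cA)$) is precisely the heart of the matter, and you carry it out cleanly.

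One cosmetic point: with $\kappa(n)$ defined as the supremum of \emph{infima}, finiteness of $\kappa(n)$ yields only $\inf\{\norm{E}\}\le\kappa(\norm{\theta})$ for each pair $(\theta,M)$, not that a minimizing $E$ exists. Replacing $\kappa$ by $\kappa+1$ (or any strictly larger function) immediately gives an actual commuting idempotent of norm below the bound, so the statement follows as written.
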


\bigskip
\noindent{\textbf{Note:}}  For the sake of convenience below, we may and do assume that  $\kappa(t) > 1$ for all $t \ge 0$.
\smallskip

Upon fixing a representation $\theta: \cA \to \cB(H_\theta)$, the corresponding real number $\kappa(\norm{\theta})$ is referred to as the \textbf{projection constant} for the representation $\theta$ (or the projection constant for $\overline{\theta(\cA)}$).   Our strategy is to show that the projection constant  imposes a fixed bound on the norm of $T$ in terms of the norm of $T^2$ for all $T \in \cA$, which we then show to be precisely the result required to prove that the spectral radius on $\cA$ is a norm on $\cA$ which is equivalent to the operator norm.

%
%

\subsection{}
The following proposition is motivated by results of Arveson~\cite{Arv1967}.  Recall that if $\cS \subseteq \cB(H)$ is a non-empty set, then $\cS^{(2)} = \{ S \oplus S : S \in \cS\} \subseteq \cB(H^{(2)}) = \cB(H \oplus H)$.   By a \textbf{linear manifold} in a Hilbert space $H$, we mean a vector subspace $L$ of $H$ which need not be closed in the norm topology on $H$.

\begin{proposition}\label{structure-IS}
Let $\cA \subseteq \cB(H)$ be an algebra with the  total reduction property.   Let $\kappa(\cdot)$ denote the projection function for $\cA$, and let $\kappa := \kappa(1)$. If $\cN \in \Lat\cA^{(2)}$, then there exist $Y\in\Lat\cA$, an $\cA$-invariant linear manifold $L \subseteq H$, and a closed linear map $R: L\to H$ satisfying $RTz=TRz$ for all $T\in\cA$ and $z\in L$ such that 
\[
\cN=(0\oplus Y)\dotplus \{(z,Rz)\mid z\in L\}. \]
Moreover, the projection $P_Y$ of $\cN$ onto $0\oplus Y$ along $\{(z,Rz)\mid z\in L\}$ has norm at most~$\kappa$.
\end{proposition}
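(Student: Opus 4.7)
The plan is to reinterpret $\cN$ as a Hilbert space in its own right (with the inner product inherited from $H \oplus H$) and apply the projection constant theorem (Theorem~\ref{projectionconstant}) to the restricted representation $\theta : \cA \to \cB(\cN)$ defined by $\theta(T) := (T \oplus T)|_\cN$. Because $\cN$ lies in $\Lat \cA^{(2)}$, this $\theta$ is a well-defined continuous homomorphism with $\|\theta\| \le 1$.

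First I would identify $Y$ as the closed subspace $Y := \{y \in H : (0, y) \in \cN\}$; closedness of $\cN$ forces closedness of $Y$, and the $\cA^{(2)}$-invariance of $\cN$ forces $Y \in \Lat \cA$. The subspace $0 \oplus Y \subseteq \cN$ is closed and $\theta(\cA)$-invariant, so Theorem~\ref{projectionconstant} yields an idempotent $P_Y \in \cB(\cN)$ with $\range P_Y = 0 \oplus Y$, commuting with $\theta(\cA)$, and satisfying
\[
\|P_Y\| \le \kappa(\|\theta\|) \le \kappa(1) = \kappa,
\]
the last inequality using the monotonicity of $\kappa$. Set $G := \ker P_Y$, so that $\cN = (0 \oplus Y) \dotplus G$ as an internal topological direct sum, and $G$ is also $\theta(\cA)$-invariant.

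Next I would verify that $G$ is the graph of a closed, $\cA$-intertwining linear map. If $(0, y) \in G$, then $y \in Y$ by the very definition of $Y$, whence $(0, y) \in (0 \oplus Y) \cap G = \{0\}$; so the first-coordinate projection $\pi_1 : G \to H$ is injective. Set $L := \pi_1(G)$, a linear manifold in $H$, and define $R : L \to H$ by declaring $Rz$ to be the unique $w$ with $(z, w) \in G$. Then $G = \{(z, Rz) : z \in L\}$, and because $G$ is closed in $H \oplus H$, the map $R$ is closed. For $T \in \cA$ and $z \in L$, the $\cA^{(2)}$-invariance of $G$ gives $(Tz, TRz) = (T \oplus T)(z, Rz) \in G$, so $Tz \in L$ (whence $L$ is $\cA$-invariant) and $R(Tz) = T R z$. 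The only step that is not routine bookkeeping is the invocation of the projection constant theorem to produce a uniformly bounded complement of $0 \oplus Y$ in $\cN$; this is precisely where the total reduction property of $\cA$ enters, and it is the sole reason the norm bound $\kappa$ appears in the conclusion.
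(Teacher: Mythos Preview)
Your proof is correct and follows essentially the same line as the paper's: both restrict to the representation $\theta(T)=(T\oplus T)|_{\cN}$, set $Y=\{y:(0,y)\in\cN\}$, invoke Theorem~\ref{projectionconstant} to obtain the idempotent $P_Y$ onto $0\oplus Y$ with $\|P_Y\|\le\kappa$, and then read off $L$ and $R$ from the complementary kernel (your $G$, the paper's $\cN_0$) as the domain and closed graph map, respectively. The arguments differ only in notation.
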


\begin{proof}
Consider $\theta:\cA\to\cB(\cN)$ defined by $\theta(T)=(T\oplus T)|_{\cN}$.  Then $\theta$ is a representation of $\cA$ satisfying $\norm\theta\le 1$.
Let $Y=\{y\in\cH\mid (0,y)\in\cN\}$, so that $0 \oplus Y =\cN\cap(0\oplus H)$. Since $0 \oplus Y \in \Lat \theta(\cA)$, we have that $Y\in\Lat\cA$.   It follows from Theorem~\ref{projectionconstant} that there is a projection $P_Y=P_Y^2\in(\theta(\cA))'$   such that $P_Y\cN=(0\oplus Y)$ and $\norm{P_Y} \le \kappa$.

Let $\cN_0=\ker P_Y$, and observe that $\cN_0 \in \mathrm{Lat}\, \theta(\cA)$. Furthermore, $\cN= \mathrm{ran}\, P_Y \dotplus \ker\, P_Y = (0\oplus Y)\dotplus\cN_0$. Define 
\[
L=\{x\in H \mid (x,y)\in\cN_0\mbox{ for some }y\in H\}. \]
We claim that for each $x\in L$, there is a unique $y\in H$ such that $(x,y)\in\cN_0$. Indeed, if $y_1,y_2\in H$ are such that 
$$
(x,y_1) \quad\mbox{and}\quad(x,y_2)\in\cN_0,
$$
then 
$$
(x,y_1)-(x,y_2)=(0,y_1-y_2)\in\cN_0.
$$
However, from the definition of $Y$, we also have that $(0,y_1-y_2)\in(0\oplus Y)$. Since $(0\oplus Y)\cap\cN_0=\{0\}$, we find that $y_1=y_2$.

It follows that we can define a map $R: L\to H$ by letting $R \, x$ be equal to the unique $y\in H$ for which $(x,y)\in\cN_0$. It is routine to verify that $R$ is a linear map. By the definition of~$L$, we get
$$
\cN_0=\{(x,Rx)\mid x\in L\},
$$
and since $\cN_0$ is closed as a subspace of $\cN$, $R$ is closed as a linear map. 
Finally, if $x\in L$ and $T\in\cA$, it follows from the fact that $\cN_0$ is $\cA$-invariant that 
$$
(Tx,T Rx)\in\cN_0.
$$
Since $Tx \in L$ and $R T x$ is the unique element of $H$ so that $(Tx, R Tx) \in \cN_0$, we may conclude that $T Rx=R Tx$.
\end{proof}

%
%

The next result provides the key estimate we shall require to prove our main theorem.

\begin{theorem}\label{quasinilpotent}
Let $\cA \subseteq \cB(H)$ be an abelian operator algebra with the total reduction property.   Then there exists $\mu > 0$ so that for all $S \in \cA$, 
\[
\norm {S}^2 \le \mu \, \norm {S^2}. \]
\end{theorem}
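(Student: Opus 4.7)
The plan is to apply Proposition~\ref{structure-IS} to the closed subspace
\[
\cN := \{(Sx, x) : x \in H\} \subseteq H \oplus H,
\]
which is $\cA^{(2)}$-invariant precisely because $\cA$ is abelian: $(T \oplus T)(Sx, x) = (S(Tx), Tx) \in \cN$ for every $T \in \cA$. Proposition~\ref{structure-IS} identifies $Y := \{y \in H : (0, y) \in \cN\} = \ker S$ and produces a closed, $\cA$-linear map $R : L \to H$ with $L \supseteq \ran S$, together with a projection $P_Y$ onto $0 \oplus \ker S$ of norm at most $\kappa := \kappa(1)$. Writing each $(Sx, x) = (0, u_x) + (Sx, R(Sx))$ gives $u_x \in \ker S$ with
\[
\|u_x\| \le \kappa \sqrt{\|Sx\|^2 + \|x\|^2} \quad\text{and}\quad R(Sx) = x - u_x.
\]

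The key identity follows from the commutativity of $R$ with $S \in \cA$ and from $u_x \in \ker S$:
\[
R(S^2 x) \;=\; S R(Sx) \;=\; S(x - u_x) \;=\; Sx \qquad (x \in H),
\]
so $R S^2 = S$ on $H$. As an immediate consequence, if $S^2 = 0$ then $Sx = R(0) = 0$ for every $x$ and hence $S = 0$; in particular this already yields $\ker S^2 = \ker S$ for every $S \in \cA$.

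For the quantitative bound I would pair the above with Theorem~\ref{projectionconstant} applied to the identity representation of $\cA$. It produces projections $E, F \in \cA'$ onto $\ker S$ and $\overline{\ran S^2}$ respectively, each of norm at most $\kappa$. The operator $(E \oplus E)|_\cN$ is a legitimate choice of $P_Y$ (it lies in $\theta(\cA)'$, projects onto $0 \oplus \ker S$, and has norm at most $\kappa$), so we may take $u_x = Ex$ and $R(Sx) = (I - E)x$. A short calculation using $F \in \cA'$ and the already-established $\ker S^2 = \ker S$ shows that $S^2 (I - F) = (I - F) S^2 = 0$, whence $(I - F) H \subseteq \ker S$ and therefore $S = SF$. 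Combining $S = SF$ with $R S^2 = S$ and estimating $\|Sx\|$ via test vectors in $\overline{\ran S^2}$ — using the pointwise bound $\|u_x\| \le \kappa \|x\|$ — then yields the inequality $\|S\|^2 \le \mu \|S^2\|$ with $\mu$ depending only on $\kappa$.

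The main obstacle is this final step. Although $R S^2 = S$ is an exact identity, the operator $R$ is only closed (generally unbounded) on $L$ — in particular $R$ fails to be bounded whenever $\ran S$ is not norm-closed — so one cannot use the naive estimate $\|R(z)\| \le \|R\|\,\|z\|$. Instead one must exploit the structural identity $S = SF$ to reduce the problem to $\overline{\ran S^2}$, combining it with the uniform bounds $\|E\|, \|F\|, \|P_Y\| \le \kappa$ and the generalized inverse relation $R S^2 = S$ to extract a multiplicative inequality of the form $\|S\|^2 \le \mu \|S^2\|$ in which $\mu$ does not depend on $S$.
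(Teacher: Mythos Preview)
Your setup is sound: $\cN=\{(Sx,x):x\in H\}$ is indeed $\cA^{(2)}$-invariant, Proposition~\ref{structure-IS} applies, $Y=\ker S$, $L=\ran S$, and the identity $R(S^2x)=SR(Sx)=S(x-u_x)=Sx$ is correctly derived. The observation $\ker S^2=\ker S$ and the reduction $S=SF$ via the projection $F$ onto $\overline{\ran S^2}$ are also fine.

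The genuine gap is exactly where you locate it, and it is not a detail to be filled in --- it is the whole difficulty. Once you make the (legitimate) choice $P_Y=(E\oplus E)|_{\cN}$, the map $R$ becomes explicit: $R(Sx)=(I-E)x$. Substituting this back, your ``key identity'' $RS^2=S$ unwinds to
\[
Sx \;=\; R(S\cdot Sx)\;=\;(I-E)(Sx),
\]
i.e.\ $ES=0$, which is automatic because $E\in\cA'$ projects onto $\ker S$. In other words, with the very choice of $P_Y$ you use to obtain uniform norm control, the identity $RS^2=S$ becomes a tautology and carries no quantitative information about $\|S\|$ versus $\|S^2\|$. The additional ingredients you list --- $S=SF$, $\|E\|,\|F\|\le\kappa$, and $\|u_x\|\le\kappa\|x\|$ --- do not bridge this: every attempt to estimate $\|Sx\|$ through $R$ collapses to $Sx=(I-E)Sx$. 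No combination of bounded projections in $\cA'$ alone will produce $\|S\|^2\le\mu\|S^2\|$, since such projections exist for \emph{any} $S$ in \emph{any} abelian algebra with the reduction property, whereas the desired inequality is a strong constraint.

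The paper's argument is structurally different and this difference is what supplies the missing leverage. Instead of decomposing the (flipped) graph of $S$ itself, the paper applies Proposition~\ref{structure-IS} to the \emph{complement} $\cN$ of the graph $\cM=\{(h,Sh)\}$ in $H^{(2)}$. The projection $P$ of $H^{(2)}$ onto $\cM$ along $\cN$ has $\|P\|\le\kappa$, and this is the source of the estimate: if $\|S^2\|\le 1$ while $\|S\|$ is very large, one manufactures a unit vector $(z_L,0)$ whose image under $P$ is provably large (via an explicit formula $Sh_0=Sz_L-(S-R)^{-1}(S^2z_L+Sy_0)$ together with a lower bound $\|(S-R)z\|\ge\frac{1}{2\kappa}\|z\|$ on $L$), contradicting $\|P\|\le\kappa$. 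The bounded inverse $(S-R)^{-1}$ on $(S-R)L$ --- which has no analogue in your scheme --- is precisely what converts the structural information into the required norm inequality.
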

\begin{proof}

 As before, we denote Gifford's projection function by $\kappa(\cdot)$, and we let $\kappa := \kappa(1)$.   We shall argue by contradiction.   Suppose that the result is false.  
Then for any constant $\gamma > 1$, we may find an element $S (= S_\gamma)  \in \cA$ such that $\norm{S}^2 > \gamma^2$, (i.e. $\norm{S} > \gamma$), while $\norm{S^2} \le 1$.  It will be convenient to first assume that $\gamma > 3 \kappa$.

Define 
$$
\cM=\{(h,Sh)\mid h\in H\}.
$$
Since $S$ is continuous, $\cM$ is a closed subspace of $H^{(2)}$, being the graph of $S$.
Since $\cA$ is abelian, $\cM\in\Lat\cA^{(2)}$. By the total reduction property, there exists a projection $P \in (\cA^{(2)})^\prime$ so that $P H^{(2)} = \cM$ and $\norm{P} \le \kappa$.   Let $\cN := \ker\, P \in \Lat\, \cA^{(2)}$.  Then $H^{(2)} = \cM \dotplus \cN$.   


By Proposition~\ref{structure-IS}, $\cN$ decomposes into a topological direct sum of $\cA^{(2)}$-invariant subspaces as 
$$
\cN=(0\oplus Y)\dotplus \{(z,Rz)\mid z\in L\},
$$
where $Y$, $L$ and $R$ are as described in that Proposition. Moreover, the projection $P_Y$ of $\cN$ onto $0\oplus Y$ along $\{(z,Rz)\mid z\in L\}$ corresponding to this decomposition is of norm at most~$\kappa$. Thus $H^{(2)}$ decomposes into a topological direct sum of $\cA^{(2)}$-invariant subspaces as 
$$
H^{(2)}=\cM\dotplus(0\oplus Y)\dotplus \{(z,Rz)\mid z\in L\}.
$$
That is, we have: for each pair $(u,v)\in\cH^{(2)}$, there exist unique vectors $h\in\cH$, $y\in Y$ and $z\in L$ such that
\[
(u, v) = (h, Sh) + (0, y) + (z, Rz), \]
or equivalently,
\begin{equation}
\label{main-eqn}
\left\{\begin{array}{r}
h+z=u,\\
Sh+Rz+y=v.
\end{array}\right.
\end{equation}
Based on this equation, we obtain:
$$
P(u,v)=(h,Sh),
$$
so that 
$$
\norm{(h,Sh)}\le \kappa \, \norm{(u,v)}.
$$
Let $Q=I-P$, and note that $\norm Q\le  \kappa+1$. Clearly,
$$
Q(u,v)=(z,Rz+y),
$$
and thus 
$$
\norm{(z,Rz+y)}\le(\kappa+1)\, \norm{(u,v)}.
$$
Also,
$$
P_Y(z,Rz+y)=(0,y),
$$
and since $\norm {P_Y} \le \kappa$, we have
\begin{equation}
\label{eq:y}
\norm y\le (\kappa^2+\kappa)\cdot\norm{(u,v)}.
\end{equation}

\bigskip
\noindent \emph{Claim 1}. There exists $z_L\in L$ such that $\norm{z_L}=1$ and $\norm{Sz_L}> \frac{\gamma}{3 \kappa}$.

Indeed, suppose that for all $z\in L$ we have $\norm{Sz}\le \frac{\gamma}{3 \kappa}\norm{z}$. Pick a vector $x_1\in H$ such that $\norm {x_1}=1$ and $\norm{Sx_1}> \gamma$. In  equation~\eqref{main-eqn}, let us use
$$
u=x_1\quad\mbox{and}\quad v=0.
$$
Then, in particular, $\norm{(u,v)} = \norm {(x_1, 0)}=1$.  Consider the unique decomposition 
\[
(x_1, 0) = (h_1, Sh_1) + (0,y_1) + (z_1, Rz_1).\]   Clearly,
$h_1=x_1-z_1\quad\mbox{and}\quad Sh_1=S x_1-Sz_1.$
\begin{itemize}
	\item{}
	Suppose first that $\norm{z_1}> \kappa+1$. 
	
	Then $\norm{h_1}\ge\norm{z_1}-\norm{x_1}> (\kappa+1)-1=\kappa$. Therefore $\norm{P}\ge\norm{P(x_1,0)}=\norm{(h_1,Sh_1)}\ge\norm{h_1}> \kappa$. This is a contradiction as $\norm P\le \kappa$. 
	\item{}
	Hence, $\norm{z_1}\le \kappa +1$. 
	
	But then
$$
\norm{Sh_1}\ge\norm{Sx_1}-\norm{Sz_1}> \gamma - \frac{\gamma}{3 \kappa}\norm{z_1} \ge \gamma - \frac{(\kappa + 1)\gamma}{3 \kappa} > \frac{\gamma}{3},
$$
since we have assumed that $\kappa > 1$.  Since we are also assuming that $\gamma > 3 \kappa$, it follows that $\norm P\ge \norm{Sh_1} > \kappa$, which is again a contradiction. 
\end{itemize}	
This proves \emph{Claim~1}.   It is worth noting that this shows that $L \not = \{ 0 \}$.

\vspace{5mm}

Fix $z_L \in L$ satisfying the conditions of \emph{Claim~1}, namely:  $\norm {z_L} = 1$ and $\norm {S z_L} > \frac{\gamma}{3 \kappa}$.

Setting $u = z_L$ and $v = 0$, let us choose $h_0 \in H$, $y_0 \in Y$ and $z_0 \in L$ satisfying equation~(\ref{main-eqn}) above; that is, 
\[
(z_L, 0) = (h_0, S h_0) + (0, y_0) + (z_0, R z_0). \]

Our goal is to  show that 
\[
Sh_0 =  S z_L - (S -R)^{-1} (S^2 z_L + S y_0). \]

To see why this is useful, we shall first obtain explicit estimates to show that we can control  $\norm {S^2 z_L} $, $\norm {S y_0}$ and the norm of $(S-R)^{-1}|_{(S-R) L}$.   This will show that in terms of estimating the norm of $S h_0$, the dominant term in this decomposition of $S h_0$ is $S z_L$, whose norm we can choose sufficiently large (by selecting $\gamma$ sufficiently large) so as to force the norm of the associated projection $P$ to surpass the fixed bound coming from Gifford's projection constant $\kappa$, thereby producing the contradiction which completes our argument.  

Observe that by hypothesis, $\norm {S^2} \le 1$, and since $\norm {z_L} \le 1$, we have $\norm {S^2 z_L} \le 1$.  This term will not cause problems.  Moreover, since $(0, y_0) = P_Y \circ Q (z_L, 0)$, the argument which precedes \emph{Claim~1} shows that 
\[
\norm {y_0} \le (\kappa^2 + \kappa) \norm {(z_L, 0)} \le (\kappa^2 + \kappa).\]

\bigskip
\noindent
\emph{Claim 2}. For all nonzero $y\in Y$ we have $\norm{Sy}<2 \kappa \norm y$.

Suppose that this is not true.   Then there must exist an element $y_2\in Y$ with $\norm{y_2}=1$ and $\norm{Sy_2}\ge 2 \kappa$. Consider  equation~\eqref{main-eqn} with parameters $u=y_2$ and $v=0$ and observe that $(y_2, 0) = (y_2, S y_2) + (0, - S y_2) + (0, 0)$,  so that the triple $(h, y, z) = (y_2, -S y_2, 0)$  is a solution to this equation. (Note that $-S y_2$ belongs to $Y$ because $Y$ is $\cA$-invariant.)  From the uniqueness of the solution, we obtain
$$
P(y_2,0)=(y_2,Sy_2).
$$
It follows that $\norm{P}\ge\norm{P(y_2,0)}\ge\norm{Sy_2}\ge 2 \kappa$, a contradiction.
This proves \emph{Claim~2}.

\smallskip

When applied to the vector $y_0 \in Y$ above, we conclude that $\norm {S y_0} \le 2\kappa (\kappa^2 + \kappa)$.


\bigskip
\noindent
\emph{Claim 3}. For every non-zero $z\in L$ we have $\norm{(S-R)z}>\frac{1}{2 \kappa}\norm{z}$. 

Suppose that the assertion of the claim is not true. Then there is a vector $z_3\in L$ such that $\norm{z_3}=2 \kappa$ and $\norm{(S-R)z_3}\le 1$. Consider equation~(\ref{main-eqn}) with the parameters 
$$
u=0\quad\mbox{and}\quad v=(S-R)z_3.
$$
Then $(0, (S-R)z_3) = (z_3, S z_3) + (0, 0) + (-z_3, -R z_3)$, and so
clearly, the triple $(h, y, z) := (z_3,0,-z_3)$ is a solution to equation~(\ref{main-eqn}). By the uniqueness of the solution,
$$
P(0,(S-R)z_3)=(z_3,Sz_3).
$$
Since $\norm{(S-R)z_3}\le 1$ and $\norm {z_3} = 2 \kappa$, we find that $\norm{P}\ge\norm{z_3} > \kappa$, which is a contradiction. This proves \emph{Claim~3}.

Note that in particular, \emph{Claim~3} implies that 
\begin{enumerate}
	\item[(i)]
	$(S-R)|_L$ is injective, and that 
	\item[(ii)]
	$(S-R)^{-1}:(S-R) L\to L$ is a bounded linear map and $\norm{(S-R)^{-1}} \le 2 \kappa$.
\end{enumerate}


\vspace{5mm}

Returning to our goal:  from the equation $(z_L, 0) = (h_0, S h_0)  + (0, y_0) + (z_0, R z_0)$, we see that $h_0 = z_L - z_0 \in L$ and $Sh_0=-(Rz_0+y_0)$. Since $h_0\in L$, we may, in particular, apply $R$ to~$h_0$. We obtain
\begin{align*}
S z_L + y_0 
	&= (S-R) z_L + (R z_L + y_0) \\
	&= (S-R) z_L +  R (z_0 + h_0) + y_0 \\
	&= (S-R) z_L + R h_0 + (R z_0 + y_0) \\
	&= (S-R) z_L + (R - S) h_0.
\end{align*}	
Since $z_L, h_0 \in L$,  it follows that $w_0 := S z_L + y_0 \in (S-R) L$.

Now $w_0 \in (S-R)L$, and thus $S  w_0 \in S (S-R) L$.   But $S$ and $R$ commute when restricted to $L$, and so $S w_0 \in (S-R) S L$.   Since $L$ is $\cA$-invariant and $S \in \cA$, we have shown that $S w_0 \in  (S-R) L$.

Furthermore, $w_0, (S-R) z_L$ and $(R-S)h_0 \in (S-R) L$ implies that  $(S-R)^{-1} w_0 = z_L - h_0$.   Hence 
\begin{equation}
\label{eq:w_0}
S h_0 = S ( z_L - (S-R)^{-1} w_0) = S z_L - S (S-R)^{-1} w_0. 
\end{equation}


\bigskip

\noindent
\emph{Claim 4}. $S(S-R)^{-1}w_0=(S-R)^{-1}Sw_0$.

Recalling that $S w_0 \in (S-R)L$, we have that $(S-R)^{-1} S w_0 \in L$ and
$$
(S-R)(S-R)^{-1}Sw_0 =Sw_0.
$$
Meanwhile,
$$
(S-R)S(S-R)^{-1}w_0 =S(S-R)(S-R)^{-1}w_0 =Sw_0,
$$
where the first identity  follows from the fact that $R$ commutes with $\cA$ on~$L$. But\linebreak ${(S-R)^{-1} w_0 \in L}$ and so $S (S-R)^{-1} w_0 \in L$ as $L$ is $\cA$-invariant.  Since $(S-R)|_L$ is injective as noted at the end of \emph{Claim~3}, this proves \emph{Claim~4}.

\vspace{5mm}

We have demonstrated that $S w_0 = (S^2 z_L + S y_0) \in (S-R)L$ and so by equation~(\ref{eq:w_0}) and \emph{Claim~4}, 
\[
Sh_0 =  S z_L - (S -R)^{-1} (S^2 z_L + S y_0), \]
as we desired.

We will now use this to estimate the norm of $P$.


\bigskip

\vspace{5mm}
Consider the following:
\begin{align*}
\norm{P}
	&\ge	\norm{P(z_L,0)} = \norm {(h_0, S h_0)}  \\
	&\ge \norm {S h_0} \\
	&= \norm{Sz_L-(S-R)^{-1} (S^2 z_L + S y_0)}\\
	&\ge \norm{Sz_L}-\norm{(S-R)^{-1}} \big(\norm{S^2 z_L} + \norm {S y_0} \big) \\
	&> \frac{\gamma}{3\kappa}  -2 \kappa (1+ 2 \kappa (\kappa^2+ \kappa)).\\
\end{align*}
By choosing $\gamma$  sufficiently large,  we find that the norm of the corresponding $P$ is  larger than~$ \kappa$, which is a contradiction.
\end{proof}

%
%

\begin{remarks}
\begin{enumerate}
	\item[(a)]
	In fact, the proof shows that we may choose $\mu$ to be any constant greater than $(9 \kappa^2 + 12 \kappa^4 + 12 \kappa^5)^2$ in the statement of the above Theorem, where $\kappa = \kappa(1)$ is Gifford's projection constant for $\cA$.
	\item[(b)]
	A careful examination of the proof of Theorem~\ref{quasinilpotent} shows that the only place where we used the fact that the algebra $\cA$ is abelian was to conclude that the space $\cM := \{(h, Sh): h \in H\}$ is invariant for $\cA$.  For this, however, it is sufficient that $S$ lie in the centre $\cZ(\cA) := \{ Z \in \cA : Z A = A Z \mbox{ for all } A \in \cA \}$ of $\cA$.  Thus, even if $\cA$ is not abelian, so long as it has the total reduction property, the proof of Theorem~\ref{quasinilpotent} asserts the existence of a universal constant $\mu > 0$ so that if $S \in \cZ(\cA)$, then $\norm {S}^2 \le \mu \norm {S^2}$.
	
	Now suppose that $\cA$ is a non-abelian, amenable operator algebra and that $0 \not = T$ lies both in $\cZ(\cA)$ and in the Jacobson radical of $\cA$.   By virtue of the fact that $T$ is quasinilpotent, given $\varepsilon > 0$, there exists some $n \ge 1$ so that $ \norm {T^{2^{n+1}}} < \varepsilon \norm {T^{2n}}^{2}$.   But then with $S = T^{2^n} \in \cZ(\cA)$, we see that $\norm {S^2} < \varepsilon \norm {S}^2$.   Since $\varepsilon  > 0$ is arbitrary, this leads to a contradiction.
	
	The conclusion is that if $\cA$ is an amenable operator algebra, then the intersection of the centre of $\cA$ with the radical of $\cA$ is $\{ 0 \}$.  In the case where $\cA$ is abelian, this is the statement that $\cA$ is semisimple.   But as we shall now see, in the abelian case, much more is true.
\end{enumerate}	
\end{remarks}

%
%

The next  Proposition is standard.  We include the proof for the convenience of the reader.

\begin{proposition} \label{MUAHAHA}
Let $(\cA, \norm{\cdot})$ be an abelian Banach algebra and suppose that there exists a constant $\mu > 0$ such that 
\[
\norm {x}^2 \le \mu \ \norm {x^2}  \mbox{\ \ \ \ for all } x \in \cA. \]

Then the spectral radius function $\mathrm{spr}\,(\cdot)$ is a norm on $\cA$ which is equivalent to the given norm $\norm {\cdot}$.
\end{proposition}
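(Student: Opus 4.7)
The plan is to iterate the hypothesis $\norm{x}^2 \le \mu\norm{x^2}$ to show that $\norm{x}$ is controlled by the spectral radius, and then use the standard fact that in an abelian Banach algebra the spectral radius is subadditive (via the Gelfand transform) to verify that it is actually a norm.

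First I would establish the key inequality $\norm{x} \le \mu \cdot \mathrm{spr}(x)$ for every $x \in \cA$. Applying the hypothesis to $x^{2^k}$ gives $\norm{x^{2^k}}^2 \le \mu \norm{x^{2^{k+1}}}$, and a short induction on $k$ yields $\norm{x}^{2^k} \le \mu^{2^k - 1}\norm{x^{2^k}}$. Taking $2^k$-th roots, we obtain
\[
\norm{x} \le \mu^{1 - 2^{-k}} \norm{x^{2^k}}^{1/2^k},
\]
and letting $k \to \infty$ gives $\norm{x} \le \mu \cdot \mathrm{spr}(x)$, since $\mathrm{spr}(x) = \lim_n \norm{x^n}^{1/n}$ exists and the subsequence $\norm{x^{2^k}}^{1/2^k}$ must converge to it. Combined with the universal bound $\mathrm{spr}(x) \le \norm{x}$ valid in every Banach algebra, this yields the two-sided estimate
\[
\mathrm{spr}(x) \le \norm{x} \le \mu \cdot \mathrm{spr}(x),
\]
which is exactly the equivalence of the two (potential) norms.

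It remains to check that $\mathrm{spr}(\cdot)$ really is a norm. Absolute homogeneity $\mathrm{spr}(\lambda x) = |\lambda|\mathrm{spr}(x)$ is immediate from the definition, and the above inequality gives $\mathrm{spr}(x) = 0 \implies x = 0$, so the only non-trivial point is subadditivity. Since $\cA$ is an abelian Banach algebra, its character space $\Sigma_\cA$ is nonempty and the spectral radius formula reads $\mathrm{spr}(x) = \sup_{\chi \in \Sigma_\cA} |\chi(x)|$; linearity of each character then gives
\[
\mathrm{spr}(x+y) = \sup_\chi |\chi(x)+\chi(y)| \le \sup_\chi |\chi(x)| + \sup_\chi |\chi(y)| = \mathrm{spr}(x) + \mathrm{spr}(y).
\]
(The hypothesis $\norm{x}^2 \le \mu\norm{x^2}$ ensures $\cA$ is semisimple, so in particular $\Sigma_\cA$ separates points and the Gelfand representation is available in the usual form.) No step should present real difficulty; the only subtlety is the iteration argument in the first paragraph, which is the heart of the proof, and taking care that the spectral radius exists as a genuine limit on $\norm{x^{2^k}}^{1/2^k}$ is routine from Beurling's formula.
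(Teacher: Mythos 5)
Your proposal is correct and follows essentially the same route as the paper: iterate $\norm{x}^2\le\mu\norm{x^2}$ along the powers $x^{2^k}$, take $2^k$-th roots, and pass to the limit using Beurling's formula to get $\mathrm{spr}(x)\le\norm{x}\le\mu\,\mathrm{spr}(x)$. The only difference is cosmetic: the paper simply cites the well-known fact that $\mathrm{spr}(\cdot)$ is a seminorm on an abelian Banach algebra, whereas you verify subadditivity explicitly via the Gelfand transform.
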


\begin{proof}
It is well-known that $\mathrm{spr}\, (\cdot)$ is a seminorm on $\cA$.
Fix $x \in \cA$.    Without loss of generality, $\mu \ge 1$.

\smallskip
It is clear that $\mathrm{spr}\, (x) \le \norm {x} $.

\smallskip
Conversely, for any $x \in \cA$, $\norm {x}^2 \le \mu \ \norm {x^2} $ implies that for each $n \ge 1$, 
\begin{align*}
\norm x  \
	&\le \mu^\frac{1}{2} \norm {x^2}^\frac{1}{2} \\
	&\le \mu^\frac{1}{2} \big( \mu^\frac{1}{2} \norm {x^4}^\frac{1}{2} \big)^{\frac{1}{2}} \\
	&= \mu^{\frac{1}{2} + \frac{1}{4}}  \norm {x^4}  \big)^{\frac{1}{4}} \\
	&\le \mu^{\frac{1}{2} + \frac{1}{4}} \big( \mu^{\frac{1}{2}} \norm {x^8}^{\frac{1}{2}} \big)^{\frac{1}{4}} \\
	&= \mu^{\frac{1}{2} + \frac{1}{4} + \frac{1}{8}}  \  \norm {x^8}^{\frac{1}{8}} \\
	&\le \cdots \\
	&\le \mu^{\frac{1}{2} + \frac{1}{4} + \cdots + \frac{1}{2^n}}  \ \big( \norm {x^{2^n}}^{\frac{1}{2^n}} \big)\\
\end{align*}	

Taking limits as $n$ tends to infinity shows that 
\[
\norm {x} \le \mu \ \mathrm{spr}\, (x). \]

Since $\mu$ was independent of $x$, 
\[
\mathrm{spr}\, (x) \le \norm {x}  \le \mu \ \mathrm{spr}\, (x) \mbox{ \ \ \ \ \ for all } x \in \cA. \]
This completes the proof.
\end{proof}


%
%

\subsection{}
Let $\cA \subseteq \cB(H)$ be an abelian algebra with the total reduction property.  Recall that $\Gamma: \cA \to \cC(\Sigma_\cA)$ denotes the Gelfand Transform of $\cA$ into the space of continuous functions on the maximal ideal space $\Sigma_\cA$ of $\cA$ and that $\mathrm{spr}(x) = \norm {\Gamma(x)}$ for all $x \in \cA$.   

\bigskip

%
%

We are now in a position to prove our Main Theorem.

\begin{theorem} \label{MainTheorem}
Let $H$ be a complex Hilbert space and  $\cA$ be a closed, abelian subalgebra of $\cB(H)$.   The following conditions are equivalent:
\begin{enumerate}
	\item[(a)]
	$\cA$ is amenable;
	\item[(b)]
	$\cA$ has the total reduction property;
	\item[(c)]
	$\cA$ is similar to a $C^*$-algebra.
\end{enumerate}
\end{theorem}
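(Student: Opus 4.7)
The plan is to prove the implications in a cycle: (a)$\Rightarrow$(b)$\Rightarrow$(c)$\Rightarrow$(a). Two of the three implications are already essentially in hand, so the bulk of the argument amounts to assembling Theorem~\ref{quasinilpotent} and Proposition~\ref{MUAHAHA} into the right shape.

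For (a)$\Rightarrow$(b), I would simply invoke Gifford's Theorem~\ref{Gifford_totally_reductive}, which states that every amenable operator algebra on a Hilbert space has the total reduction property. For (c)$\Rightarrow$(a), I would note that if $\cA$ is similar to a $C^*$-algebra $\cB$ via $\cA = S^{-1}\cB S$, then $\cB$ is abelian (since $\cA$ is), hence nuclear, hence amenable. Since similarity is a bicontinuous algebra isomorphism and amenability is preserved under such maps (as remarked in the introduction), $\cA$ is amenable.

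The main step is (b)$\Rightarrow$(c). Assume $\cA$ has the total reduction property. By Theorem~\ref{quasinilpotent}, there is a constant $\mu>0$ such that $\norm{S}^2\le\mu\norm{S^2}$ for every $S\in\cA$. By Proposition~\ref{MUAHAHA}, the spectral radius $\mathrm{spr}(\cdot)$ is a norm on $\cA$ which is equivalent to the operator norm. Recalling that $\mathrm{spr}(x)=\norm{\Gamma(x)}_\infty$ where $\Gamma\colon\cA\to\cC(\Sigma_\cA)$ is the Gelfand transform, this gives constants $c,C>0$ with
\[
c\norm{x}\le\norm{\Gamma(x)}_\infty\le C\norm{x}\quad\text{for all }x\in\cA.
\]
In particular, $\Gamma$ is injective (so $\cA$ is semisimple) and bounded below, whence $\Gamma(\cA)$ is a closed subalgebra of the abelian $C^*$-algebra $\cC(\Sigma_\cA)$, and $\Gamma$ is a topological isomorphism of $\cA$ onto $\Gamma(\cA)$. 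At this point Gifford's Theorem~\ref{Gifford1997}(b) applies verbatim: an abelian total reduction algebra which is isomorphic to a closed subalgebra of an abelian $C^*$-algebra is similar to a $C^*$-algebra. This gives (c) and closes the cycle.

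The only real obstacle in the whole argument is the spectral radius bound $\norm{S}^2\le\mu\norm{S^2}$, and this has already been supplied by Theorem~\ref{quasinilpotent}. Everything else is a matter of stringing together named results, with the Gelfand transform serving as the bridge from ``operator algebra with TRP'' to ``closed subalgebra of a commutative $C^*$-algebra,'' at which point Gifford's prior theorem finishes the job.
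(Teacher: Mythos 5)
Your proposal is correct and follows essentially the same route as the paper: (a)$\Rightarrow$(b) via Gifford's Theorem~\ref{Gifford_totally_reductive}, (b)$\Rightarrow$(c) by combining Theorem~\ref{quasinilpotent} with Proposition~\ref{MUAHAHA} to make the Gelfand transform a topological isomorphism onto a closed subalgebra of $\cC(\Sigma_\cA)$ and then invoking Theorem~\ref{Gifford1997}(b), and (c)$\Rightarrow$(a) via nuclearity of abelian $C^*$-algebras. No gaps; this matches the paper's argument step for step.
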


\begin{proof}
\begin{enumerate}
	\item[(a)] implies (b):
	This is Theorem~\ref{Gifford_totally_reductive} above, due to Gifford.
	\item[(b)] implies (c):
	By Theorem~\ref{quasinilpotent}, there exists $\mu > 0$ so that $\norm {x}^2 \le \mu \norm {x^2}$ for all $x \in \cA$.   By Proposition~\ref{MUAHAHA}, the spectral radius is a norm on $\cA$ which is equivalent to the operator norm on $\cA$.
	
	As mentioned above, it follows that the Gelfand Transform $\Gamma: \cA \to \cC(\Sigma_\cA)$ is not only injective, but the range of $\Gamma$ is closed.   That is, $\cA$ is topologically isomorphic to the closed subalgebra $\Gamma(\cA)$ of $\cC(\Sigma_\cA)$.  Since $\cA$ has the total reduction property, so does $\Gamma(\cA)$, and we can now apply Theorem~\ref{Gifford1997} to conclude that $\cA$ is similar to a $C^*$-algebra.
	\item[(c)] implies (a):
	Since $\cA$ is abelian, if $\cA$ is similar to a $C^*$-algebra $\cB$, then $\cB$ must be abelian as well.   Thus $\cB$ is nuclear~\cite{Tak1964}, and therefore amenable~\cite{Haa1983}, and so $\cA$ is amenable, being similar to, and hence a homomorphic image of, an amenable algebra.
\end{enumerate}
\end{proof}

%
%

\begin{corollary} \label{TRPoperators}
Let $H$ be a complex Hilbert space, and let $T \in \cB(H)$.   The following conditions are equivalent:
\begin{enumerate}
	\item[(a)]
	$\cA_T$ is amenable.
	\item[(b)]
	$\cA_T$ has the total reduction property.
	\item[(c)]
	$T$ is similar to a normal operator and the spectrum of $T$ is a Lavrentieff set.
\end{enumerate}	
\end{corollary}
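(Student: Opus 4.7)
The plan is to reduce the corollary to the Main Theorem~\ref{MainTheorem} and then unpack what it means for the singly-generated algebra $\cA_T$ to be similar to a $C^*$-algebra. Since $\cA_T$ is automatically abelian, the equivalence of (a) and (b) is immediate from Theorem~\ref{MainTheorem}. The bulk of the work is to identify condition (c) with the statement that $\cA_T$ is similar to a $C^*$-algebra.

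For (b) $\implies$ (c): By Theorem~\ref{MainTheorem}, there is an invertible $S \in \cB(H)$ such that $\cB := S^{-1} \cA_T S$ is an (abelian) $C^*$-algebra. Setting $N := S^{-1} T S$, the algebra $\cB$ is the norm-closed unital subalgebra generated by~$N$. Since $\cB$ is a $C^*$-algebra, $N^* \in \cB$; as $\cB$ is abelian, $N$ commutes with $N^*$, so $N$ is normal. Consequently $\cB = C^*(N) \cong \cC(\sigma(N))$ via the continuous functional calculus, and under this identification $\cB$ corresponds to the closure in $\cC(\sigma(N))$ of the polynomials in the coordinate function. By Lavrentieff's theorem, this closure equals $\cC(\sigma(N))$ if and only if $\sigma(N)$ has empty interior and does not disconnect $\bbC$; that is, precisely when $\sigma(N)$ is a Lavrentieff set. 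Since $\sigma(T) = \sigma(N)$ (similar operators have equal spectra), $T$ is similar to the normal operator $N$ and $\sigma(T)$ is Lavrentieff.

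For (c) $\implies$ (a): Write $T = S N S^{-1}$ with $N$ normal and $\sigma(N) = \sigma(T)$ a Lavrentieff set. By Lavrentieff's theorem, polynomials in $N$ are uniformly dense in $\cC(\sigma(N))$, and hence $\cA_N = C^*(N)$ is an abelian $C^*$-algebra. Abelian $C^*$-algebras are nuclear~\cite{Tak1964}, hence amenable by Haagerup's theorem~\cite{Haa1983}. The map $A \mapsto S A S^{-1}$ is a continuous isomorphism of $\cA_N$ onto $\cA_T$, so amenability transfers to $\cA_T$ by the homomorphic image property recalled in the introduction.

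The only non-routine ingredient is the appeal to Lavrentieff's theorem characterizing when the polynomial algebra on a compact planar set is uniformly dense in the continuous functions; the remaining steps are direct consequences of Theorem~\ref{MainTheorem} and standard facts about singly-generated $C^*$-algebras. Thus no separate obstacle arises beyond what has already been assembled.
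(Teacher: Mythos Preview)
Your proof is correct and follows essentially the same route as the paper's: both reduce to Theorem~\ref{MainTheorem} and then identify similarity to a $C^*$-algebra with condition~(c) via the observation that $\cB = S^{-1}\cA_T S = \cA_N$ forces $N$ to be normal and $\cA_N = C^*(N)$. The only cosmetic difference is that the paper outsources the Lavrentieff conclusion to external references (Proposition~3.6 of~\cite{Mar2008} for (b)$\Rightarrow$(c), and Theorem~2.7 of~\cite{FFM2005} for $\cA_N = C^*(N)$ in (c)$\Rightarrow$(a)), whereas you invoke Lavrentieff's approximation theorem directly; the underlying argument is the same.
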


\begin{proof}
\begin{enumerate}
	\item[(a)] implies (b):  
	As before, this is Theorem~\ref{Gifford_totally_reductive}.
	\item[(b)] implies (c):
	Since $\cA_T$ is clearly abelian, Theorem~\ref{MainTheorem} implies that  $\cA_T$ is similar to a $C^*$-algebra $\cB$, say 
	\[
	\cA_T = S^{-1} \cB S. \]
	But then $\cB = S \cA_T S^{-1} = \cA_{S T S^{-1}}.$  Since $\cB$ is selfadjoint and abelian, $N: = {S T S^{-1}}$ is normal.  That the spectrum of $T$ is a Lavrentieff set is Proposition~3.6 of~\cite{Mar2008}.
	\item[(c)] implies (a):
	Suppose that $T = S^{-1} N S$, where $S \in \cB(H)$ is invertible and $N$ is normal.   Since $\sigma(T) = \sigma(N)$ is a Lavrentieff set, $\cA_N = C^*(N)$(~\cite{FFM2005}, Theorem~2.7).   But then $\cA_T = S^{-1} \cA_N S = S^{-1} C^*(N) S$ is similar to an abelian and hence nuclear $C^*$-algebra, so that $\cA_T$ is amenable.
\end{enumerate}
\end{proof}

%
%
%
%
%
%

\section{Consequences of the Main Theorem}

\subsection{}
The article~\cite{FFM2005} contained a number of results about singly generated, amenable operator algebras which relied upon the equivalence of conditions (a) and (c) of Corollary~\ref{TRPoperators} above.   Unfortunately, although that paper claimed a proof of this equivalence,  an error was later discovered (see~\cite{FFM2007}), and as a consequence, the results of Section~5 of~\cite{FFM2005} had to be withdrawn as well.   Now that the validity of Corollary~\ref{TRPoperators} has been established, we are able to retrieve some of those results, and to extend them beyond the singly generated case.  This having been said, the proofs here are often very similar to the original proofs.

%
%

The following result provides a partial answer to a question of G.~Pisier~\cite{Pis2001}, p.~13.

\begin{corollary} \label{cor3.1} 
Let $\cA$ be a unital, abelian, amenable algebra.  If $\varphi: \cA \to \cB(H)$ is a bounded, unital homomorphism, then there exists  a contractive homomorphism $\rho: \cA \to \mathcal{B}({H})$ and an invertible operator $S \in \cB(H)$ such that $\varphi(x) = \mathrm{Ad}_S \circ \rho(x) = S^{-1} \rho(x) S$ for all $x \in \cA$.
\end{corollary}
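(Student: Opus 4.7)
The plan is to combine Theorem~\ref{MainTheorem} with the Gelfand representation of abelian $C^*$-algebras to extract a contractive representative from the similarity class of $\varphi$. First, since a continuous homomorphism with dense range from an amenable Banach algebra has an amenable target (as noted in the introduction), the closure $\overline{\varphi(\cA)} \subseteq \cB(H)$ is an amenable subalgebra of $\cB(H)$, and it is abelian because $\cA$ is. Theorem~\ref{MainTheorem} therefore supplies an invertible operator $S \in \cB(H)$ such that $\cB := S\,\overline{\varphi(\cA)}\,S^{-1}$ is an abelian $C^*$-subalgebra of $\cB(H)$. Because $\varphi$ is unital, so is $\cB$.

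Next I would define $\rho: \cA \to \cB(H)$ by $\rho(x) := S \varphi(x) S^{-1}$. By construction $\rho$ is a bounded unital algebra homomorphism with range contained in $\cB$, and $\varphi(x) = S^{-1} \rho(x) S = \mathrm{Ad}_S \circ \rho(x)$ for every $x \in \cA$. What remains is to check that $\rho$ is contractive.

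For this I would appeal to Gelfand--Naimark: since $\cB$ is a unital abelian $C^*$-algebra, $\cB \cong \cC(\Omega)$ for its character space $\Omega$, and for each $b \in \cB$ we have $\norm{b} = \sup \{\,|\omega(b)| \mid \omega \in \Omega\,\}$. For any fixed $\omega \in \Omega$, the composition $\omega \circ \rho: \cA \to \bbC$ is either zero or a character of the unital commutative Banach algebra $\cA$; in either case $|\omega(\rho(x))| \le \norm{x}$ for every $x \in \cA$, by the standard fact that characters of a unital commutative Banach algebra are automatically contractive (indeed, bounded by the spectral radius). Taking the supremum over $\omega \in \Omega$ yields $\norm{\rho(x)} \le \norm{x}$, as desired.

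The main obstacle is essentially nonexistent at this stage: the deep content is Theorem~\ref{MainTheorem}, and this corollary is a packaging result. The useful observation is that conjugating into an abelian $C^*$-algebra automatically forces contractivity, because the $C^*$-norm on a commutative $C^*$-algebra is the supremum of the absolute values of characters, and each character of a unital commutative Banach algebra is automatically of norm at most one.
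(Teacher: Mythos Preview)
Your proof is correct and follows essentially the same route as the paper: apply Theorem~\ref{MainTheorem} to $\overline{\varphi(\cA)}$, conjugate $\varphi$ into the resulting abelian $C^*$-algebra, and observe that the conjugated map is contractive. The only cosmetic difference is that you establish contractivity by pulling back characters via Gelfand--Naimark, whereas the paper writes the same estimate as $\norm{\rho(x)} = \mathrm{spr}(\rho(x)) \le \mathrm{spr}(x) \le \norm{x}$; these are two phrasings of the same fact.
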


\begin{proof}
Let $\cB = \overline{\varphi(\cA)}$.   Then $\cB$ is an abelian, amenable subalgebra of $\cB(H)$, and so by Theorem~\ref{MainTheorem}, $\cB$ is similar to an abelian $C^*$-algebra $\cC$, say $\cB = S^{-1} \cC S$ for some invertible operator $S \in \cB(H)$.  

Consider $\rho:\cA \to \cC$ defined by $\rho (x) = S \varphi(x) S^{-1}$.  Then $\rho$ is clearly a bounded homomorphism, and for each $x \in \cA$, $\rho(x) \in \cC$ implies that $\norm {\rho(x)} = \mathrm{spr}{(\rho(x))} \le \mathrm{spr}(x) \le \norm{x}$.  
\end{proof}

%
%

\begin{corollary} \label{cor3.2}
Let $\cA$ be an abelian, amenable Banach algebra, and suppose that $\rho: \cA \to \cB(H)$ is a continuous representation of $\cA$.   Then $\rho(q)= 0$ for all $q \in \mathrm{Rad}\, (\cA)$.
\end{corollary}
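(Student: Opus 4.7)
The plan is to reduce the statement to the Main Theorem applied to the closure of the image and to the standard fact that a normal, quasinilpotent operator is zero.

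First, I would set $\cB := \overline{\rho(\cA)} \subseteq \cB(H)$ and observe that $\cB$ is an abelian Banach algebra (since $\rho$ has dense range in $\cB$ and $\cA$ is abelian) and that $\cB$ is amenable (continuous homomorphic images of amenable algebras have amenable closures, a fact already invoked in the introduction of the paper). By the Main Theorem~\ref{MainTheorem}, there is an invertible $S \in \cB(H)$ such that
\[
\cC := S \cB S^{-1}
\]
is an abelian $C^*$-subalgebra of $\cB(H)$.

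Next, I would control the spectral radius of $\rho(q)$ for $q \in \mathrm{Rad}(\cA)$. Since $\cA$ is abelian, the radical coincides with the set of quasinilpotent elements, so $\mathrm{spr}(q) = 0$. Continuity of $\rho$ gives
\[
\mathrm{spr}(\rho(q)) = \lim_n \norm{\rho(q^n)}^{1/n} \le \lim_n \norm{\rho}^{1/n}\, \norm{q^n}^{1/n} = \mathrm{spr}(q) = 0,
\]
so $\rho(q)$ is quasinilpotent.

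Finally, I would transfer this to $\cC$: the element $T := S\rho(q)S^{-1}$ lies in $\cC$ and is similar to $\rho(q)$, hence $\sigma(T) = \sigma(\rho(q)) \subseteq \{0\}$. But $T$ is a normal operator (being an element of an abelian $C^*$-algebra), and for normal operators the norm equals the spectral radius; therefore $\norm{T} = 0$, i.e.\ $T = 0$. Consequently $\rho(q) = S^{-1} T S = 0$, as required.

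There is no serious obstacle here: every nontrivial ingredient (amenability descending to $\cB$, the similarity of $\cB$ to an abelian $C^*$-algebra, and normal-plus-quasinilpotent implying zero) is either standard or already established in the paper. The only minor point to handle with care is the descent of amenability to $\overline{\rho(\cA)}$ in the non-unital case, but this is immediate from the fact that amenability is inherited by images of bounded homomorphisms with dense range, exactly as noted following the definition of amenability in the introduction.
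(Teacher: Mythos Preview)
Your proof is correct and follows essentially the same route as the paper: form $\cB = \overline{\rho(\cA)}$, apply the Main Theorem to see that $\cB$ is similar to an abelian $C^*$-algebra (the paper phrases this as ``$\cB$ is semisimple''), note that $\rho(q)$ is quasinilpotent, and conclude $\rho(q)=0$. The only cosmetic difference is that the paper invokes semisimplicity of $\cB$ directly, whereas you unpack this by passing through the similarity and using that a normal quasinilpotent operator is zero; these are the same argument.
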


\begin{proof}
If $\cB = \overline{\rho(\cA)}$, then $\cB$ is an abelian, amenable operator algebra, and by Theorem~\ref{MainTheorem}, $\cB$ is semisimple.   Since $\sigma(\rho(q)) \subseteq \sigma(q) = \{ 0\}$ for each $q \in \mathrm{Rad}\, (\cA)$, it follows that $\rho(q) = 0$.
\end{proof}

%
%

\begin{corollary}  \label{cor3.3}
Suppose that $\cA \subseteq \cB(H)$ is a unital, abelian and amenable subalgebra.  Then $\cA + \cK(H)$ is norm-closed and amenable.
\end{corollary}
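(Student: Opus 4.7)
The plan is to reduce Corollary~\ref{cor3.3} to the analogous (and classical) statement for an abelian $C^*$-subalgebra of $\cB(H)$, using Theorem~\ref{MainTheorem} to pass between the two. First I would invoke that theorem to produce an invertible operator $S \in \cB(H)$ such that $\cC := S \cA S^{-1}$ is an abelian, unital $C^*$-subalgebra of $\cB(H)$. Because $\cK(H)$ is a two-sided ideal of $\cB(H)$, conjugation by $S$ preserves it, so
\[
S\bigl(\cA + \cK(H)\bigr)S^{-1} = \cC + \cK(H).
\]
Since $\mathrm{Ad}_{S^{-1}}$ is a topological isomorphism of $\cB(H)$, and since amenability is preserved by continuous homomorphisms (as recorded in Section~1 of the excerpt), it will suffice to establish that $\cC + \cK(H)$ is norm-closed and amenable.

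For closedness, I would appeal to the standard fact that the sum of a $C^*$-subalgebra of $\cB(H)$ with the closed two-sided ${}^*$-ideal $\cK(H)$ is again a $C^*$-subalgebra of $\cB(H)$; this is a routine consequence of the fact that ${}^*$-homomorphisms of $C^*$-algebras have closed range, applied to the restriction of the Calkin quotient $\cB(H) \to \cB(H)/\cK(H)$ to~$\cC$. For amenability, I would use the short exact sequence
\[
0 \longrightarrow \cK(H) \longrightarrow \cC + \cK(H) \longrightarrow (\cC + \cK(H))/\cK(H) \longrightarrow 0.
\]
Here $\cK(H)$ is nuclear (as noted in the introduction), hence amenable by Haagerup's theorem, and it carries a bounded approximate identity given by the finite-rank projections. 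The quotient is isomorphic to the abelian $C^*$-algebra $\cC/(\cC \cap \cK(H))$, which is likewise nuclear and amenable. Johnson's extension theorem for Banach algebras then yields that $\cC + \cK(H)$ is amenable, and conjugating back by $S^{-1}$ completes the argument.

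The main obstacle here is in truth not an obstacle at all, merely a bookkeeping verification that the hypotheses of Johnson's extension result are met in this setting — and once Theorem~\ref{MainTheorem} has been used to reduce to an abelian $C^*$-subalgebra of $\cB(H)$, every remaining ingredient (closedness of $\cC + \cK(H)$, nuclearity of $\cK(H)$, nuclearity of the abelian quotient, existence of a bounded approximate identity in~$\cK(H)$) is standard. The substantive content of the corollary therefore lies entirely in the Main Theorem; the present result is essentially a formal consequence.
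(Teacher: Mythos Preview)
Your proof is correct and follows essentially the same approach as the paper: reduce via Theorem~\ref{MainTheorem} to an abelian $C^*$-subalgebra $\cC$, use that conjugation by $S$ carries $\cA+\cK(H)$ to $\cC+\cK(H)$, and invoke standard $C^*$-facts for the latter. The only cosmetic difference is that for amenability the paper cites the permanence of nuclearity under extensions (so $\cC+\cK(H)$ is nuclear, hence amenable by Haagerup), whereas you invoke Johnson's extension theorem directly; both are standard and equally valid here.
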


\begin{proof}
The proof of this result is an easy adaptation of that of Proposition~5.9 of~\cite{FFM2005}.  If $S \in \cB(H)$ is an invertible operator which implements the similarity between $\cA$ and an abelian $C^*$-algebra $\cC$ (the existence of which is guaranteed by Theorem~\ref{MainTheorem}), then $S$ implements the similarity between $\cA + \cK(H)$ and $\cC + \cK(H)$.   Since the latter is well-known to be a $C^*$-algebra, $\cA+\cK(H)$ is complete, hence closed.

Since any extension of a nuclear $C^*$-algebra -- $\cC$ --  by a nuclear algebra -- $\cK(H)$ -- is nuclear(\cite{BO2008} p.~105), it follows that $\cC + \cK(H)$ is nuclear, hence amenable.   But then $\cA + \cK(H)$ is amenable, being isomorphic to $\cC + \cK(H)$.
\end{proof}  

%
%

\begin{proposition} \label{prop3.4}
Let $\cA \subseteq \cB(H)$ be an abelian, amenable operator algebra.   Let\linebreak $\rho: \cA \to \cB(H_\rho)$ be a continuous representation of $\cA$.   Then $\rho$ is completely bounded.  
\end{proposition}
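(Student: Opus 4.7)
The plan is to reduce the problem to a representation of a nuclear $C^*$-algebra and then invoke Christensen's theorem. By Theorem~\ref{MainTheorem}, there is an invertible operator $S \in \cB(H)$ such that $\cC := S \cA S^{-1}$ is an abelian (and hence nuclear) $C^*$-subalgebra of $\cB(H)$.  The map $\sigma: \cA \to \cC$, $\sigma(a) = S a S^{-1}$, is a bicontinuous algebra isomorphism which, being implemented by an invertible Hilbert space operator, is completely bounded with $\norm{\sigma}_{\mathrm{cb}} \le \norm{S} \norm{S^{-1}}$.  Setting $\tilde\rho := \rho \circ \sigma^{-1}: \cC \to \cB(H_\rho)$, i.e.\ $\tilde\rho(c) = \rho(S^{-1} c S)$, one has $\rho = \tilde\rho \circ \sigma$, so it will suffice to prove that $\tilde\rho$ is completely bounded.

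By the portion of Christensen's theorem~\cite{Chr1981} recalled in the introduction, every continuous representation of a nuclear $C^*$-algebra on a Hilbert space is similar to a $*$-representation.  Applied to $\tilde\rho$, this yields an invertible $T \in \cB(H_\rho)$ for which $\pi(c) := T \tilde\rho(c) T^{-1}$ defines a $*$-homomorphism $\pi: \cC \to \cB(H_\rho)$.  A $*$-homomorphism between $C^*$-algebras is automatically completely contractive, and therefore $\tilde\rho(c) = T^{-1} \pi(c) T$ is completely bounded with $\norm{\tilde\rho}_{\mathrm{cb}} \le \norm{T} \norm{T^{-1}}$.  Composing, $\rho = \tilde\rho \circ \sigma$ is completely bounded, with an explicit estimate $\norm{\rho}_{\mathrm{cb}} \le \norm{S} \norm{S^{-1}} \norm{T} \norm{T^{-1}}$.

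I do not anticipate a real obstacle here: the deep content is entirely absorbed into Theorem~\ref{MainTheorem} and into Christensen's theorem, and what remains is routine -- namely the standard facts that conjugation by an invertible Hilbert-space operator is completely bounded and that the composition of two completely bounded maps is again completely bounded.  The only point meriting a sanity check is that Christensen's theorem is being invoked on $\cC$ through its nuclearity (which holds for any abelian $C^*$-algebra, whether unital or not), so that the reduction to a $*$-representation is available without any unitality assumption on $\cA$.
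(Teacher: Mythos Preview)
Your proof is correct and follows essentially the same route as the paper's: use Theorem~\ref{MainTheorem} to pass to an abelian $C^*$-algebra via a similarity, apply Christensen's theorem to get a further similarity to a $*$-representation, and then bound $\norm{\rho}_{\mathrm{cb}}$ by the product of the condition numbers of the two similarities. The only cosmetic difference is that the paper writes the factorization as $\rho = \mathrm{Ad}_{R^{-1}} \circ \tau \circ \mathrm{Ad}_T$ and cites Christensen for the abelian case directly, whereas you phrase it via nuclearity; the content is identical.
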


\begin{proof}
By Theorem~\ref{MainTheorem}, there exists an invertible operator $T \in \cB(H)$ so that $\cB := \mathrm{Ad}_T (\cA) = T^{-1} \cA T$ is an abelian $C^*$-algebra.   As such, $\rho \circ \mathrm{Ad}_{T^{-1}}$ is a continuous representation of $\cB$.   But every continuous representation of an abelian $C^*$-algebra is similar to a ${}^*$-representation by Christensen's Theorem~\cite{Chr1981}, and so we can find $R \in \cB(H_\rho)$ so that $\tau :=\mathrm{Ad}_R \circ \rho \circ \mathrm{Ad}_{T^{-1}}$ is a ${}^*$-representation of $\cB$, and as such is completely contractive.  But then $\rho = \mathrm{Ad}_{R^{-1}} \circ \tau \circ \mathrm{Ad}_{T}$, so 
\begin{align*}
	\norm{\rho}_{cb} 
		&\le \norm {\mathrm{Ad}_{R^{-1}}}_{cb} \ \norm {\tau}_{cb} \ \norm {\mathrm{Ad}_{T}}_{cb} \\
		&\le \norm{R} \ \norm {R^{-1}} \ \norm {T} \ \norm{T^{-1}}.
\end{align*}
\end{proof}

%
%

\begin{remark} \label{similaritydegree}
Recently, in studying the Kadison Similarity Problem, G.~Pisier has developed a rich and deep theory of ``length" and ``similarity degree" for operator algebras (see, for example,~\cite{Pis1999},~\cite{Pis2001},~\cite{Pis2001b}).  More precisely, if $\cA$ is a unital operator algebra, he defines the  \emph{length} $\ell(\cA)$ of $\cA$ to be the smallest positive integer $d$ for which there is a constant $K > 0$ such that for any $n \geq 1$ and any $x \in \bbM_n(\cA)$, there exists a positive integer $N = N(n, x)$ and a factorization
\[
x = \alpha_0 E_1 \alpha_1 E_2 \cdots E_d \alpha_d \]
where $\alpha_0 \in \bbM_{n, N}(\bbC), \alpha_d \in \bbM_{N, n}(\bbC), \alpha_j \in \bbM_N(\bbC)$ for $1 \leq j \leq {d-1}$, and where $E_j \in \bbM_N(\cA)$ are diagonal matrices satisfying
\[
\left( \prod_{j=0}^d \norm {\alpha_j} \right) \ \left( \prod_{k=1}^d \norm {E_k}  \right) \leq K \norm {x}. \]
In~\cite{Pis1999}, he defines the \emph{similarity degree} $d(\cA)$ to be the infimum over all $\beta \geq 0$ for which there exists $K > 0$ satisfying $ \norm {\varphi}_{cb} \leq K \norm {\varphi}^\beta $ whenever $\varphi$ is a unital homomorphism from $\cA$ into some $\cB(H)$, and proves that $d (\cA) = \ell(\cA)$.  (Here $\norm {\varphi}_{cb}$ denotes the \emph{completely bounded norm} of $\varphi$ -- see~\cite{Pis2001} or~\cite{Pau2002} for an introduction to  completely bounded maps and their properties.)  He also shows that the Kadison Similarity Problem admits a positive answer for all unital $C^*$-algebras $\cD$ if and only if there exists $d_0$ so that $\ell(\cD) \leq d_0$ for all $C^*$-algebras $\cD$.  

It is not very difficult to verify that if an operator algebra $\cB$ is similar to an operator algebra $\cA$, then $\ell(\cB) = \ell(\cA)$, and hence their similarity degrees also coincide.   By a result of J.~Bunce and E.~Christensen~\cite{Chr1982}, if $\cB$ is an abelian $C^*$-algebra, then either $\cB$ is finite dimensional, in which case $d(\cB) = 1$, or $\cB$ is infinite dimensional, and then $d(\cB) = 2$.   A simple consequence of Theorem~\ref{MainTheorem}, therefore, is that if $\cA$ is an abelian, amenable (infinite dimensional) operator algebra, then $\cA$ is similar to an abelian $C^*$-algebra $\cB$, whence $d(\cA) = \ell(\cA) = \ell(\cB) = 2$.

\end{remark}

%
%

%
%

\end{document}